\theoremstyle{plain}
\newtheorem{theorem}{Theorem}[section]
\newtheorem{corollary}[theorem]{Corollary}
\newtheorem{lemma}[theorem]{Lemma}
\newtheorem{proposition}[theorem]{Proposition}
\theoremstyle{definition}
\newtheorem{definition}[theorem]{Definition}
\newtheorem{remark}[theorem]{Remark}
\theoremstyle{remark}
\newtheorem{example}[theorem]{Example}
\numberwithin{theorem}{section}
\numberwithin{equation}{section}
\newcommand{\R}{\mathbb{R}}
\newcommand{\dist}{\mathrm{dist}}
\newcommand{\cl}{\overline}
\newcommand{\del}{\partial}
\newcommand{\loc}{\mathrm{loc}}
\DeclareMathOperator{\divergence}{div}
\DeclareMathOperator*{\essinf}{ess\,inf}
\DeclareMathOperator*{\spt}{supp}
\newcommand{\capacity}{\mathrm{cap}}
\newcommand{\wkto}{\rightharpoonup}
\newcommand{\trinorm}[1]
{{
    \left\vert\kern-0.20ex\left\vert\kern-0.20ex\left\vert
    #1 
    \right\vert\kern-0.20ex\right\vert\kern-0.20ex\right\vert
}}
\newcommand{\M}{\mathcal{M}}
\renewcommand{\epsilon}{\varepsilon}
\renewcommand{\theta}{\vartheta}
\renewcommand{\kappa}{\varkappa}
\renewcommand{\rho}{\varrho}
\renewcommand{\phi}{\varphi}
\begin{document}


\title[Elliptic equations with singular nonlinearity]
{A stability result for elliptic equations with singular nonlinearity and its applications to homogenization problems}
\author{Takanobu Hara}
\email{takanobu.hara.math@gmail.com}
\address{Department of Mathematics, Hokkaido University,
                Kita 8 Nishi 10  Sapporo, Hokkaido 060-0810, Japan}
\date{\today}
\subjclass[2020]{35J61; 35J25; 35B27} 


\begin{abstract}
We consider model semilinear elliptic equations of the type
\[
\begin{cases}
- \mathrm{div} (A(x) \nabla u) =  f u^{- \lambda}, \quad u > 0 \quad \text{in} \ \Omega,
\\
u \in H_{0}^{1}(\Omega),
\end{cases}
\]
where $\Omega$ is a bounded domain in $\mathbf{R}^{N}$, $N \ge 1$,
$A \in L^{\infty}(\Omega)^{N \times N}$ is a coercive matrix,
$0 < \lambda \le 1$ and $f$ is a nonnegative function in $L^{1}_{loc}(\Omega)$,
or more generally, nonnegative Radon measure on $\Omega$.
We discuss $H^{1}$-stability of $u$ under a minimal assumption on $f$.
Additionally, we apply the result to homogenization problems.
\end{abstract}


\maketitle


\section{Introduction and main results}\label{sec:introduction}

Let $\Omega$ be a bounded domain in $\R^{N}$ ($N \ge 1$).
We consider model elliptic differential equations of the type
\begin{equation}\label{eqn:DP}
\begin{cases}
\displaystyle
- \divergence (A(x) \nabla u) = \frac{\sigma}{u^{\lambda}}, \quad u > 0 \quad \text{in} \ \Omega,
\\
u \in H_{0}^{1}(\Omega),
\end{cases}
\end{equation}
where $H_{0}^{1}(\Omega)$ is the closure of $C_{c}^{\infty}(\Omega)$ 
with respect to the norm $\| \nabla \cdot \|_{L^{2}(\Omega)}$,
$A$ is a coercive matrix-valued function in $M(\alpha, \beta, \Omega)$
(see Sect. \ref{sec:SEP} for details),
$\lambda$ is a fixed positive constant,
and $\sigma$ is a nonnegative function in $L^{1}_{\loc}(\Omega)$,
or more generally, nonnegative (locally finite) Radon measure on $\Omega$.

Elliptic boundary problems of the type \eqref{eqn:DP} are said to be singular
from the difficulty to control near $u = 0$.
The physical background of the problems lies
in heat conduction in conductive materials,
binary communication by signals,
or the theory of pseudoplastic fluids.
Mathematical study involved has been ongoing, especially since the work by
Crandall, Rabinowitz and Tartar \cite{MR427826}.
For basics of the singular elliptic problems, we refer to
\cite{MR548961, MR860919, MR1458503, MR1784140, MR2064102, MR2359771, MR3712944}
and the references therein.

Traditionally, $\sigma$ was assumed to be a continuous or locally bounded function.
In \cite{MR2592976}, Boccardo and Orsina studied $L^{p}$-integrable $\sigma$.
After their work, singular elliptic problems with measure valued coefficients
have been studied extensively (e.g., \cite{MR3579130,MR3712944, MR3829750}).
Boccardo and Casado-D\'{\i}az \cite{MR3177473} applied the results
to H (or G)-convergence of elliptic operators.
Extension to other types homogenization problems and further developments
can be found in \cite{MR3583569, MR3583469, MR3802572}.

It is a delicate problem whether we may restrict the space for finding
solutions to \eqref{eqn:DP} to just $H_{0}^{1}(\Omega)$.
If $u$ vanishes rapidly near the boundary, then its Dirichlet integral diverges.
To avoid this, $\lambda$ and $\sigma$ must satisfy a certain relation.
However, if we change the class to find solutions, for example, $H_{\loc}^{1}(\Omega) \cap C(\cl{\Omega})$,
the relation between them changes naturally
(see, e.g., \cite{MR1866062, hara2021trace} and the above references).
Note that there is no inclusion between $H_{0}^{1}(\Omega)$ and
the class of classical solutions $C^{2}(\Omega) \cap C(\cl{\Omega})$.

Below, we only consider (finite energy) weak solutions in $H_{0}^{1}(\Omega)$.
For simplicity, we assume that $A(x) = I$ and $N \ge 2$ (see \cite{MR548961} for $N =1$).
The first significant result was given by Lazer and McKenna \cite{MR1037213}.
They proved that if $0 < \sigma \in C^{\alpha}(\cl{\Omega})$ and $\partial \Omega$ is sufficiently smooth,
then there exists a weak solution to \eqref{eqn:DP} if and only if $\lambda < 3$.
Zhang and Cheng \cite{MR2064102} extended the result to $\sigma(x) := \dist(x, \partial \Omega)^{-s}$.
More generally, D\'{\i}az, Hern\'{a}ndez and Rakotoson \cite{MR2831448} considered
\begin{equation}\label{eqn:DHR_sigma}
\sigma(x) = f(x) \, \dist(x, \partial \Omega)^{-s}, \quad 0 < c \le f(x) \in L^{\infty}(\Omega).
\end{equation}
They proved that Eq. \eqref{eqn:DP} has a weak solution if and only if
\begin{equation}\label{eqn:DHR}
s < (3 - \lambda) / 2.
\end{equation}
Meanwhile, Boccardo and Orsina \cite{MR2592976}
proved that if $0 \le \sigma \in L^{m}(\Omega)$, $0 < \lambda \le 1$ and
\begin{equation}\label{eqn:cond_BO}
m \ge \left( \frac{2^{*}}{1 - \lambda} \right)',
\end{equation}
then there exists a weak solution to \eqref{eqn:DP},
where $2^*$ is the Sobolev conjugate of $2$ (see, \eqref{eqn:sobolev}).
When $\lambda > 1$, the corresponding solution may not have finite energy in general.
In fact, Sun and Zhang \cite{MR3168615} proved that if $0 < \sigma \in L^{1}(\Omega)$ and 
$\lambda > 1$, then there exists a weak solution to \eqref{eqn:DP} if and only if
\[
\exists u_{0} \in H_{0}^{1}(\Omega) \ \text{s.t.} \ 
\int_{\Omega} |u_{0}|^{1 - \lambda} \sigma(x) \, dx < \infty.
\]
They also prove that there is no such a function if $\sigma \equiv 1$ and $\lambda \ge 3$. 
Further information can be found in the paper by Oliva and Petitta \cite{MR3712944}.

Let us assume that $0 < \lambda < 1$ and $A$ is symmetric.
Then, we can consider the functional 
\[
J(u)
=
\frac{1}{2} \int_{\Omega} A \nabla u \cdot \nabla u \, dx
-
\frac{1}{1 - \lambda} \int_{\Omega} u_{+}^{1 - \lambda} \, d \sigma.
\]
Since $J$ is convex on $\mathcal{K} := \{ u \in H_{0}^{1}(\Omega) \colon u \ge 0 \text{ a.e. in } \Omega \}$,
if it is bounded from below, we can get a unique minimizer $u$ by using the direct method.
The Euler-Lagrange equation of $J$ can not be derived directly because it is not differentiable,
but we can show that $u$ is a weak solution to \eqref{eqn:DP}
by making appropriate approximations (for the precise meaning, see, e.g., \cite[Remark 5.4]{MR2592976}).
The remaining problem, the boundedness of $J$,
is equivalent to the validity of the $L^{2}(dx)$-$L^{1 - \lambda}(d \sigma)$ trace inequality
\begin{equation}\label{eqn:trace_lambda}
\| \varphi \|_{L^{1 - \lambda}(\Omega; \sigma)}
\le C \| \nabla \varphi \|_{L^{2}(\Omega)}, \quad \forall \varphi \in C_{c}^{\infty}(\Omega).
\end{equation}
As mentioned in later, \eqref{eqn:DHR_sigma}-\eqref{eqn:DHR} or \eqref{eqn:cond_BO}
are sufficient for \eqref{eqn:trace_lambda}, but not necessary.
In \cite{hara2021trace}, the author prove that
there is a unique weak solution $u$ to \eqref{eqn:DP}
if and only if \eqref{eqn:trace_lambda} holds.
Note that if \eqref{eqn:trace_lambda} holds, then $\sigma$
must be absolutely continuous with respect to the variational capacity.

In this paper, we define a distance of measures satisfying \eqref{eqn:trace_lambda}
and discuss continuous dependency of $u$ (Theorem \ref{thm:stability}).
Additionally, we apply the result to homogenization problems (Theorem \ref{thm:main2}).
In conclusion, we observe that weak solutions to \eqref{eqn:DP} 
are stable for perturbations of $A$ and $\sigma$.
These results can be regarded as refinements of \cite[Theorems 2.7 and 3.4]{MR3177473}.
The novelty of our results is that the class of $\sigma$ has been extended
to be necessary and sufficient for the existence of $H_{0}^{1}$-weak solutions.
We investigate various properties of measures satisfying \eqref{eqn:trace_lambda}
and compare them with concrete examples.

Basically, we do not assume the smoothness of $\partial \Omega$.
It will be assumed when we observe specific examples of $\sigma$
satisfying \eqref{eqn:trace_lambda}.
For example, bounded Lipschitz domains satisfy the conditions in Example \ref{example:ext+int}.

\subsection*{Organization of the paper}
In Sect. \ref{sec:PT}, we present auxiliary results from potential theory.
In Sect. \ref{sec:TR}, we investigate some properties of
$\sigma$ satisfying \eqref{eqn:trace_lambda}.
In Sect. \ref{sec:SEP}, we prove a stability result for weak solutions to the singular elliptic problems.
In Sect. \ref{sec:HG}, we apply the results in Sections \ref{sec:TR} and \ref{sec:SEP} to homogenization problems.

\subsection*{Acknowledgments}
This work was supported by JSPS KAKENHI Grant Number JP17H01092.

\subsection*{Notation}
We use the following notation.
Throughout the paper, $\Omega$ is a domain (connected open subset) in $\R^{N}$.
\begin{itemize}
\item
$\mathbf{1}_{E}(x) :=$ the indicator function of a set $E$.
\item
$C_{c}^{\infty}(\Omega) :=$
the set of all infinitely-differentiable functions with compact support in $\Omega$.
\item
$\M^{+}(\Omega) :=$ the set of all nonnegative Radon measures on $\Omega$.
\item
$L^{p}(\Omega; \mu) :=$ the $L^{p}$ space with respect to $\mu \in \M^{+}(\Omega)$.
\item
$f_{+} := \max \{ f, 0 \}$ and $f_{-} := \max\{ -f, 0 \}$.
\end{itemize}
For simplicity, we write $L^{p}(\Omega; dx)$ as $L^{p}(\Omega)$, where $dx$ is the Lebesgue measure.
For measures $\mu$ and $\nu$, we denote $\nu \le \mu$ if $\mu - \nu$ is a nonnegative measure.
For a sequence of extended real valued functions $\{ f_{j} \}_{j = 1}^{\infty}$,
we denote $f_{j} \uparrow f$
if $f_{j + 1} \ge f_{j}$ for all $j \ge 1$ and $\lim_{j \to \infty} f_{j} = f$.
The letter $C$ denotes various constants with and without indices.

\section{Sobolev spaces, capacity, smooth measures}\label{sec:PT}

We first recall some properties of Sobolev spaces from \cite{MR0350027,MR2777530,MR2305115}.
The Sobolev space $H^{1}(\Omega)$ is the set of all weakly differentiable functions $u$
such that $\| u \|_{H^{1}(\Omega)}$ is finite, where 
\[
\| u \|_{H^{1}(\Omega)}
:=
\left(
\int_{\Omega} |u|^{2} + |\nabla u|^{2} \, dx
\right)^{\frac{1}{2}}.
\]
Below, we assume the boundedness of $\Omega$.
We denote by $H_{0}^{1}(\Omega)$ the closure of $C_{c}^{\infty}(\Omega)$ in $H^{1}(\Omega)$.
By the Poincar\'{e} inequality, we can take $\| \nabla \cdot \|_{L^{2}(\Omega)}$ 
as the norm of $H_{0}^{1}(\Omega)$.
For simplicity, we denote by $(\cdot, \cdot)$ the inner product of $H_{0}^{1}(\Omega)$.
Also, we denote by $\langle \cdot, \cdot \rangle$
the duality pairing between $H^{-1}(\Omega)$ and $H_{0}^{1}(\Omega)$.

Let $O \subset \R^{N}$ be open, and let $K \subset O$ be compact.
We define the \textit{(variational) capacity} $\capacity(K, O)$ 
of the condenser $(K, O)$ by
\[
\capacity(K, O)
:=
\inf \left\{
\| \nabla u \|_{L^{2}(O)}^{2} \colon u \geq 1 \ \text{on} \ K, \ u \in C_{c}^{\infty}(O)
\right\}.
\]
For $E \subset O$, we define
\[
\capacity(E, O)
:=
\inf_{ \substack{ E \subset U \subset O \\ U: \text{open} } } \sup_{ \substack{ K \subset U \\ K: \text{compact} } }
\capacity(K, O).
\]
For $E \subset \R^{N}$, the \textit{Sobolev capacity} of $E$ is defined by
\[
C(E)
=
\inf
\int_{\R^{n}} (|u|^{2} + |\nabla u|^{2}) \, dx,
\]
where the infimum is taken over all
$u \in H^{1}(\R^{N})$ such that $u = 1$ in a neighborhood of $E$.
If $\Omega$ is bounded and $E \subset \Omega$, $C(E) = 0$ if and only if $\capacity(E, \Omega) = 0$.
We say that a property holds \textit{quasieverywhere} (q.e.)
if it holds except on a set of capacity zero.
An extended real valued function $u$ on $\Omega$ is called as \textit{quasicontinuous}
if for every $\epsilon > 0$ there exists an open set $E$ such that
$C(E) < \epsilon$ and $u|_{\Omega \setminus E}$ is continuous.
Every $u \in H^{1}_{\loc}(\Omega)$
has a quasicontinuous representative $\tilde{u}$
such that $u = \tilde{u}$ a.e. Below, we denote $\tilde{u}$ by $u$ again for simplicity of notation.

We denote by $\M^{+}_{0}(\Omega)$ the set of all Radon measures $\sigma$
that are absolutely continuous with respect to the capacity, that is,
for any Borel set $E$,
\[
\capacity(E, \Omega) = 0 \ \implies \sigma(E) = 0.
\]
For simplicity, we use the following notations:
\begin{align*}
& H^{-1}(\Omega)^{+} := H^{-1}(\Omega) \cap \M^{+}(\Omega),
\\
& \M_{0, b}^{+}(\Omega) := \{ \sigma \in \M_{0}^{+}(\Omega) \colon \sigma(\Omega) < \infty \}.
\end{align*}

Let us introduce a class of smooth measures with compact support.  

\begin{definition}
We define a class of smooth measures $S_{c}(\Omega)$ by
\[
S_{c}(\Omega)
:=
\left\{
\sigma \in \M^{+}(\Omega)
\colon
\sup_{\Omega} G_{\Omega} \sigma < \infty \ \text{and} \ \spt \sigma \Subset \Omega
\right\},
\]
where $G_{\Omega} \sigma(x) := \int_{\Omega} G_{\Omega}(x, y) \, d \sigma(y)$ and
$G_{\Omega}(\cdot, \cdot)$ is the Green function for the Dirichlet Laplace operator.
\end{definition}

Note that the Green function $G_{\Omega}$ exists since $\Omega$ is bounded.
The definition of $S_{c}(\Omega)$ is inspired by the theory of Dirichlet forms (see, \cite{MR2778606}),
but for the sake of later discussion, we assume the compactness of $\spt \sigma$ unlike usual.
If $\sigma \in S_{c}(\Omega)$, then the energy of $\sigma$,
\[
\int_{\Omega} G_{\Omega} \sigma \, d \sigma
\]
is finite.
Therefore,
\[
S_{c}(\Omega)
\subset
H^{-1}(\Omega)^{+}
\subset
\M^{+}_{0}(\Omega).
\]

The following decomposition theorem holds.
See, \cite[Theorem 2.2.4]{MR2778606} or \cite[Theorem 3.6]{MR4309811}.

\begin{theorem}\label{thm:approximation}
Let $\sigma \in \M^{+}(\Omega)$.
Then, $\sigma \in \M^{+}_{0}(\Omega)$ if and only if
there exists an increasing sequence of compact sets $\{ F_{k } \}_{k = 1}^{\infty}$ such that
$\sigma_{k} := \mathbf{1}_{F_{k}} \sigma \in S_{c}(\Omega)$ for all $k \ge 1$
and
$\sigma\left( \Omega \setminus \bigcup_{k = 1}^{\infty} F_{k} \right) = 0$.
\end{theorem}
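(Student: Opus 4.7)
The plan is to treat the two implications separately. The $(\Leftarrow)$ direction is essentially formal: since $S_{c}(\Omega) \subset H^{-1}(\Omega)^{+} \subset \M^{+}_{0}(\Omega)$, for any Borel $E \subset \Omega$ with $\capacity(E, \Omega) = 0$ each $\sigma_{k}$ satisfies $\sigma(F_{k} \cap E) = \sigma_{k}(E) = 0$; combining this with $\sigma(\Omega \setminus \bigcup_{k} F_{k}) = 0$ via monotone convergence gives $\sigma(E) = 0$.

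For the $(\Rightarrow)$ direction, fix $\sigma \in \M^{+}_{0}(\Omega)$ and choose a compact exhaustion $\Omega_{j} \Subset \Omega_{j+1}$ with $\bigcup_{j} \Omega_{j} = \Omega$. Each $\tau_{j} := \mathbf{1}_{\Omega_{j}} \sigma$ is a finite measure supported in $\Omega_{j}$. Since $\Omega$ is bounded, $G_{\Omega} \mathbf{1}_{\Omega} \in L^{\infty}(\Omega)$, so Fubini yields $G_{\Omega} \tau_{j} \in L^{1}(\Omega)$; moreover, being a nonnegative superharmonic function, $G_{\Omega} \tau_{j}$ is finite quasi-everywhere, and the polar set $\{ G_{\Omega} \tau_{j} = +\infty \}$ is $\tau_{j}$-null by smoothness. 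The closed (by lower semicontinuity) level sets $A_{j,n} := \{ G_{\Omega} \tau_{j} \le n \}$ thus exhaust $\tau_{j}$ up to a null set, and inner regularity supplies compact $K_{j,n} \subset A_{j,n} \cap \Omega_{j}$ with $\tau_{j}((A_{j,n} \cap \Omega_{j}) \setminus K_{j,n}) \le 2^{-n}$.

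The decisive step is a maximum principle. The measure $\mu := \mathbf{1}_{K_{j,n}} \sigma$ is compactly supported in $\Omega$ and satisfies $G_{\Omega} \mu \le G_{\Omega} \tau_{j} \le n$ on $\spt \mu = K_{j,n}$; since $G_{\Omega} \mu$ is harmonic on $\Omega \setminus \spt \mu$ with vanishing quasi-boundary values on $\partial \Omega$, the classical maximum principle (Frostman-type) propagates the bound to all of $\Omega$, so $\mu \in S_{c}(\Omega)$. Setting $F_{k} := \bigcup_{j, n \le k} K_{j,n}$ produces a compact increasing sequence with $F_{k} \Subset \Omega_{k}$, and $G_{\Omega} \mathbf{1}_{F_{k}} \sigma$ is bounded by a finite sum of bounded potentials, hence $\mathbf{1}_{F_{k}} \sigma \in S_{c}(\Omega)$; the choice of the $K_{j,n}$ together with $\bigcup_{j} \Omega_{j} = \Omega$ yields $\sigma(\Omega \setminus \bigcup_{k} F_{k}) = 0$.

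The main obstacle is the rigorous justification of the maximum principle in the last step when $\partial \Omega$ is irregular: one has to confirm that $G_{\Omega} \mu$ has vanishing quasi-boundary values on $\partial \Omega$ and that comparison with the constant $n$ on the open set $\Omega \setminus \spt \mu$ is valid. Once this and the polarity of $\{ G_{\Omega} \tau_{j} = +\infty \}$ are established, the remaining argument is standard measure-theoretic assembly.
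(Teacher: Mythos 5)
The paper does not actually present a proof of this theorem; it cites \cite[Theorem~2.2.4]{MR2778606} (Fukushima--Oshima--Takeda, Dirichlet form setting) and \cite[Theorem~3.6]{MR4309811} (Verbitsky, Green potential setting). Your argument is a valid reconstruction along the second route and is essentially correct, with the following remarks.

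The $(\Leftarrow)$ direction is fine. For $(\Rightarrow)$, the architecture -- exhaustion $\Omega_{j}$, truncation $\tau_{j} = \mathbf{1}_{\Omega_{j}}\sigma$, level sets of $G_{\Omega}\tau_{j}$, inner regularity, finite union -- is the standard one and your verification that $\sigma(\Omega\setminus\bigcup F_{k}) = 0$ works (it is a Borel--Cantelli argument using $\sum 2^{-n} < \infty$, plus $\tau_{j}\{G_{\Omega}\tau_{j}=\infty\}=0$, which indeed follows from $\sigma\in\M_{0}^{+}$ because $\{G_{\Omega}\tau_{j}=\infty\}$ is polar). Two cosmetic points: $\spt\mu$ is only contained in $K_{j,n}$, not necessarily equal to it, but the bound you need holds on $\spt\mu$ anyway; and the index range in $F_{k} := \bigcup_{j\le k,\,n\le k}K_{j,n}$ should be stated explicitly so that the union is finite (hence compact).

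The one place where I would push back is how you phrase the "decisive step." Appealing to $G_{\Omega}\mu$ being harmonic off $\spt\mu$ together with "vanishing quasi-boundary values on $\partial\Omega$" makes the maximum principle a boundary statement, and for irregular $\partial\Omega$ you then have to fight to make the boundary comparison rigorous -- which is precisely the worry you flag at the end. This is unnecessary: the correct tool is the Cartan/Maria domination principle for Green potentials, which is an \emph{interior} statement requiring no boundary regularity. In the form needed here: if $\mu$ has compact support in the Greenian domain $\Omega$, $G_{\Omega}\mu\not\equiv\infty$, and $G_{\Omega}\mu\le n$ on $\spt\mu$ (pointwise, which you have since $\spt\mu\subset K_{j,n}\subset\{G_{\Omega}\tau_{j}\le n\}$ and $\mu\le\tau_{j}$), then $G_{\Omega}\mu\le n$ on all of $\Omega$, because the constant $n$ is superharmonic and $G_{\Omega}\mu$ is a potential. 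This is classical (see, e.g., Armitage--Gardiner or Helms) and closes your "main obstacle" cleanly. With that substitution your proof is complete; it is a more hands-on Green-potential argument than the Dirichlet-form nest construction in \cite{MR2778606}, and it matches the spirit of \cite{MR4309811}.
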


\begin{lemma}\label{lem:embedding}
Assume that $\sigma \in S_{c}(\Omega)$.
Then, the embedding 
\begin{equation*}\label{eqn:embedding_L2}
H_{0}^{1}(\Omega) \hookrightarrow L^{2}(\Omega; \sigma)
\end{equation*}
is continuous, and the embedding 
\begin{equation}\label{eqn:embedding_L1}
H_{0}^{1}(\Omega) \hookrightarrow L^{1}(\Omega; \sigma)
\end{equation}
is compact.
\end{lemma}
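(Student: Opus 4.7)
The plan is to read both embeddings off the Green potential $u := G_{\Omega} \sigma$. The definition of $S_{c}(\Omega)$ gives $M := \|u\|_{L^{\infty}(\Omega)} < \infty$ and the finite-energy $\int u \, d\sigma$, which together force $u \in H_{0}^{1}(\Omega)$ to be the Riesz representative of the functional $v \mapsto \int v \, d\sigma$; in particular $\sigma \in H^{-1}(\Omega)^{+}$ and $-\laplacian u = \sigma$ in $\mathcal{D}'(\Omega)$.

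For the continuous embedding $H_{0}^{1}(\Omega) \hookrightarrow L^{2}(\Omega; \sigma)$, I would fix $\varphi \in C_{c}^{\infty}(\Omega)$ and test the equation $-\laplacian u = \sigma$ successively against $\varphi^{2}$ and against $u \varphi^{2}$. The first test gives $\int \varphi^{2} \, d\sigma = 2\int \varphi \nabla \varphi \cdot \nabla u \, dx$; the second, combined with $u \le M$ and Cauchy-Schwarz with absorption, controls $\int \varphi^{2} |\nabla u|^{2} \, dx$ by a combination of $\int \varphi^{2} \, d\sigma$ and $\int |\nabla \varphi|^{2} \, dx$. Feeding the latter bound into a Cauchy-Schwarz estimate of the former, then absorbing the $L^{2}(\sigma)$-term once more, produces a trace inequality $\int \varphi^{2} \, d\sigma \le C(M) \int |\nabla \varphi|^{2} \, dx$, which extends to all of $H_{0}^{1}(\Omega)$ by density together with quasicontinuous representatives (meaningful because $\sigma \in \M_{0}^{+}(\Omega)$).

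The compact embedding $H_{0}^{1}(\Omega) \hookrightarrow L^{1}(\Omega; \sigma)$ I would instead deduce from the weak sequential continuity of the pairing with $\sigma \in H^{-1}(\Omega)$. Given a bounded sequence $\{ \varphi_{n} \} \subset H_{0}^{1}(\Omega)$, reflexivity and the Rellich-Kondrachov theorem produce a subsequence with $\varphi_{n} \wkto \varphi$ in $H_{0}^{1}(\Omega)$ and $\varphi_{n} \to \varphi$ in $L^{2}(\Omega)$. Setting $g_{n} := |\varphi_{n} - \varphi|$, the chain rule in $H^{1}$ keeps $\{ g_{n} \}$ bounded in $H_{0}^{1}(\Omega)$, while $g_{n} \to 0$ in $L^{2}(\Omega)$; a Urysohn-subsequence argument then identifies every weak $H_{0}^{1}$-cluster point of $\{ g_{n} \}$ as $0$, so $g_{n} \wkto 0$ in $H_{0}^{1}(\Omega)$. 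The weak continuity of $\langle \sigma, \cdot \rangle$ finally yields $\int |\varphi_{n} - \varphi| \, d\sigma = \langle \sigma, g_{n} \rangle \to 0$, as required.

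The main obstacle is the bookkeeping in the first step: one must control $\int \varphi^{2} |\nabla u|^{2} \, dx$ by its own test-function computation and then reinsert it to absorb the $L^{2}(\sigma)$-term that appears on the wrong side. The $L^{1}$-compactness, by contrast, becomes essentially painless once one recognises that $\sigma$ acts on $H_{0}^{1}(\Omega)$ as a weakly sequentially continuous linear functional, which turns what superficially looks like a convergence-in-measure statement into a weak limit in a reflexive space.
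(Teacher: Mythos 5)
Your proof is correct but follows a genuinely different route from the paper's on both halves. For the continuous embedding $H_{0}^{1}(\Omega)\hookrightarrow L^{2}(\Omega;\sigma)$ the paper applies the Picone inequality $\nabla(\varphi^{2}/G_{\Omega}\sigma)\cdot\nabla G_{\Omega}\sigma\le|\nabla\varphi|^{2}$ directly, which gives $\int\varphi^{2}\,d\sigma\le M\int|\nabla\varphi|^{2}\,dx$ with the sharp constant $M=\|G_{\Omega}\sigma\|_{\infty}$ in one line; your double-test/absorption scheme is morally the same computation (Picone is precisely what you get by testing against $\varphi^{2}/u$ instead of $\varphi^{2}$ and $u\varphi^{2}$ separately), but it produces a worse numerical constant. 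The more interesting divergence is in the compactness of $H_{0}^{1}(\Omega)\hookrightarrow L^{1}(\Omega;\sigma)$: the paper exploits $\spt\sigma\Subset\Omega$ to insert a cutoff $\eta$, rewrite the trace inequality over $\R^{N}$, and then cite a general compactness criterion (Maz'ya's Theorem 11.9.2). You instead observe that $S_{c}(\Omega)\subset H^{-1}(\Omega)^{+}$, pass to a subsequence with $\varphi_{n}\wkto\varphi$ in $H_{0}^{1}$ and $\varphi_{n}\to\varphi$ in $L^{2}(dx)$, note that $g_{n}:=|\varphi_{n}-\varphi|$ is then bounded in $H_{0}^{1}$ with the only possible weak cluster point $0$, hence $g_{n}\wkto 0$, and conclude $\|\varphi_{n}-\varphi\|_{L^{1}(\sigma)}=\langle\sigma,g_{n}\rangle\to 0$. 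This is more self-contained (no appeal to an external compactness theorem) and strictly more general, since it uses only $\sigma\in H^{-1}(\Omega)^{+}$ and never the compact support of $\sigma$. The one ingredient you are implicitly using — and should perhaps flag — is the Brezis--Browder type identification $\langle\sigma,v\rangle=\int\tilde v\,d\sigma$ (quasicontinuous representative) for $\sigma\in H^{-1}(\Omega)\cap\M^{+}(\Omega)$ and $v\in H_{0}^{1}(\Omega)$, $v\ge 0$; granted that, the argument is complete.
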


\begin{proof}
Recall the following Picone-type inequality:
\begin{equation}\label{eqn:picone}
\nabla \left( v^{2} u^{-1} \right) \cdot \nabla u \le |\nabla v|^{2},
\quad
\forall u, v \in C^{1}(\Omega), \ v \ge 0, \ u > 0
\end{equation}
(see, e.g., \cite{MR3273896}).
Setting $M := \| G_{\Omega} \sigma \|_{L^{\infty}(\Omega)}$ and using \eqref{eqn:picone}, we get
\begin{equation}\label{eqn:picone}
\begin{split}
\int_{\Omega} \varphi^{2} \, d \sigma
\le
M \int_{\Omega} \varphi^{2} \, \frac{d \sigma}{G_{\Omega} \sigma}
\le
M \int_{\Omega} |\nabla \varphi|^{2} \, dx,
\quad \forall \varphi \in H_{0}^{1}(\Omega).
\end{split}
\end{equation}
Since $\sigma(\Omega)$ is finite, \eqref{eqn:embedding_L1} is also valid.
The compactness of \eqref{eqn:embedding_L1} follows from 
a general result for trace inequalities.
Since $\sigma$ has compact support, we can take $\eta \in C_{c}^{\infty}(\Omega)$ such that
$\eta \equiv 1$ on $\spt \sigma$.
Therefore,
\[
\begin{split}
\| u \|_{L^{1}(\Omega; \sigma)}
& =
\| \eta u \|_{L^{1}(\Omega; \sigma)}
\\
& \le
(\sigma(\Omega) M)^{\frac{1}{2}} \| \nabla (\eta u) \|_{L^{2}(\Omega)}
\\
& \le
C \| u \|_{H^{1}(\R^{N})},
\quad
\forall u \in H^{1}(\R^{N}).
\end{split}
\]
Then, \cite[Theorem 11.9.2]{MR2777530} yields the desired compactness.
\end{proof}

\begin{corollary}\label{cor:a.e.conv}
Let $\sigma \in \M_{0}^{+}(\Omega)$.
Let $\{ u_{j} \} \subset H_{0}^{1}(\Omega)$, and let $u \in H_{0}^{1}(\Omega)$.
Assume that $u_{j} \wkto u$ weakly in $H_{0}^{1}(\Omega)$.
Then, there exists a subsequence of $\{ u_{j} \}$ such that
$u_{j} \to u$ $\sigma$-a.e. in $\Omega$.
\end{corollary}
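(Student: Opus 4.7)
The plan is to combine the decomposition Theorem~\ref{thm:approximation} with the compact embedding in Lemma~\ref{lem:embedding} and a standard diagonal extraction.

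First, since $\sigma \in \M_{0}^{+}(\Omega)$, Theorem~\ref{thm:approximation} produces an increasing sequence of compact sets $\{F_{k}\}_{k=1}^{\infty}$ such that $\sigma_{k} := \mathbf{1}_{F_{k}} \sigma \in S_{c}(\Omega)$ for every $k \ge 1$, and
\[
\sigma\!\left( \Omega \setminus \bigcup_{k = 1}^{\infty} F_{k} \right) = 0.
\]
In particular, $\sigma_{k} \le \sigma$ and $\sigma_{k}$ agrees with $\sigma$ on $F_{k}$, so any subsequence converging $\sigma_{k}$-a.e.\ for every $k$ will automatically converge $\sigma$-a.e.\ in $\Omega$.

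Next, fix $k$. By Lemma~\ref{lem:embedding}, the embedding $H_{0}^{1}(\Omega) \hookrightarrow L^{1}(\Omega; \sigma_{k})$ is compact. Since $u_{j} \wkto u$ weakly in $H_{0}^{1}(\Omega)$, the sequence $\{u_{j}\}$ is bounded in $H_{0}^{1}(\Omega)$, and compactness gives $u_{j} \to u$ strongly in $L^{1}(\Omega; \sigma_{k})$ along a subsequence. From strong $L^{1}(\sigma_{k})$ convergence one can extract a further subsequence converging $\sigma_{k}$-almost everywhere.

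Now I apply a diagonal argument. Starting from $\{u_{j}\}$, pick a subsequence $\{u_{j}^{(1)}\}$ converging $\sigma_{1}$-a.e.\ to $u$; from this, pick a further subsequence $\{u_{j}^{(2)}\}$ converging $\sigma_{2}$-a.e.\ to $u$; and so on. The diagonal sequence $v_{n} := u_{n}^{(n)}$ is eventually a subsequence of each $\{u_{j}^{(k)}\}$, hence $v_{n} \to u$ $\sigma_{k}$-a.e.\ for every $k$. Combining this with $\sigma(\Omega \setminus \bigcup_{k} F_{k}) = 0$ yields $v_{n} \to u$ $\sigma$-a.e.\ in $\Omega$, which is the desired conclusion.

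There is no real obstacle; the only point requiring a little care is that the compactness in Lemma~\ref{lem:embedding} is stated for measures in $S_{c}(\Omega)$, which is exactly why the approximation through the sets $F_{k}$ is needed to pass from a general $\sigma \in \M_{0}^{+}(\Omega)$ to measures on which the compact trace embedding applies.
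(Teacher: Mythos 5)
Your proof is correct and follows the same route as the paper: decompose $\sigma$ via Theorem~\ref{thm:approximation}, use the compact embedding of Lemma~\ref{lem:embedding} to get $\sigma_k$-a.e.\ convergence along a subsequence for each $k$, and close with a diagonal extraction. You simply spell out the intermediate step (strong $L^1(\sigma_k)$ convergence implies a further subsequence converging $\sigma_k$-a.e.) that the paper leaves implicit.
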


\begin{proof}
Let $\{ F_{k} \}$ be a sequence of compact sets in Theorem \ref{thm:approximation}.
By the compact embedding result in Lemma \ref{lem:embedding},
we can choose a subsequence of $\{ u_{j} \}$ such that
$u_{j}(x) \to u(x)$ for $\sigma$-a.e. $x \in F_{1}$.
By a diagonalization argument, we can choose a subsequence of $\{ u_{j} \}$ such that
$u_{j}(x) \to u(x)$ for $\sigma$-a.e. $x \in \bigcup_{k = 1}^{\infty} F_{k}$.
This subsequence has the desired property.
\end{proof}

\section{Energy of measures}\label{sec:TR}

In order to control \eqref{eqn:trace_lambda},
we define a distance of measures.

\begin{definition}
Let $0 \le \lambda \le 1$.
We define a subset $\mathcal{E}_{\lambda}(\Omega)$ of $\M_{0}^{+}(\Omega)$ by
\[
\mathcal{E}_{\lambda}(\Omega)
:=
\left\{ \sigma \in \M_{0}^{+}(\Omega) \colon \trinorm{ \sigma }_{\lambda} < \infty \right\},
\]
where
\[
\trinorm{ \sigma }_{\lambda}
:=
\sup
\left\{
\int_{\Omega} |\varphi|^{1 - \lambda} \, d \sigma
\colon
\forall\varphi \in H_{0}^{1}(\Omega),
\
\| \nabla \varphi \|_{L^{2}(\Omega)} \le 1
\right\}
\]
for $0 \le  \lambda < 1$, and
\[
\trinorm{ \sigma }_{1}
:=
\sigma(\Omega).
\]
For $\sigma, \nu \in \mathcal{E}_{\lambda}(\Omega)$ we define
\[
\mathbf{d}_{\lambda}(\sigma, \nu)
:=
\trinorm{ \, |\sigma - \nu| \, }_{\lambda},
\]
where $|\sigma - \nu|$ is the total variation of $\sigma - \nu$.
\end{definition}

Clearly, $\mathcal{E}_{0}(\Omega) = H^{-1}(\Omega)^{+}$ and
$\mathcal{E}_{1}(\Omega) = \M_{0, b}^{+}(\Omega)$.
By H\"{o}lder's inequality,
\begin{equation}\label{eqn:interpolation}
\trinorm{\sigma}_{\lambda}
\le
\trinorm{\sigma}_{1}^{\lambda} \trinorm{\sigma}_{0}^{1 - \lambda}
=
\sigma(\Omega)^{\lambda} \| \sigma \|_{H^{-1}(\Omega)}^{1 - \lambda}.
\end{equation}
In particular,
\[
S_{c}(\Omega)
\subset
\mathcal{E}_{\lambda}(\Omega)
\subset
\M^{+}_{0}(\Omega).
\]
By definition and the inequality $|\sigma - \nu| \le \sigma + \nu$,
\[
\mathbf{d}_{\lambda}(\sigma, \nu)
\le
\trinorm{ \sigma + \nu }_{\lambda}
\le
\trinorm{ \sigma }_{\lambda} + \trinorm{ \nu }_{\lambda},
\quad \forall \sigma, \nu \in \mathcal{E}_{\lambda}.
\]
Similarly, since $|\sigma - \nu| \le |\sigma - \mu| + |\mu - \nu|$, we have
\[
\mathbf{d}_{\lambda}(\sigma, \nu) 
\le
\mathbf{d}_{\lambda}(\sigma, \mu) + \mathbf{d}_{\lambda}(\mu, \nu),
\quad
\forall \sigma, \nu, \mu \in \mathcal{E}_{\lambda}.
\]
Meanwhile,
\[
\mathbf{d}_{\lambda}(\sigma, \mu) = 0 \implies |\sigma - \nu| = 0 \implies \sigma = \nu.
\]

There are already studies that characterize
the $L^{2}(dx)$-$L^{1 - \lambda}(d \sigma)$ trace inequality \eqref{eqn:trace_lambda}.
Maz'ya and Netrusov \cite{MR1313906} (see also \cite[Theorem 11.6.1]{MR2777530}) gave
a capacitary  characterization for \eqref{eqn:trace_lambda}.
Verbitsky \cite{MR1747901}
gave another characterization which is based on an inequality of the type \eqref{eqn:picone}
(see, also \cite{MR3706136,MR3881877} for related results).
More precisely, as in \cite[Theorem 1.3]{MR4309811}, the following two-sided estimate holds:
\begin{equation}\label{eqn:COV}
\trinorm{ \sigma }_{\lambda}
\le
\left(
\int_{\Omega} (G_{\Omega} \sigma )^{ \frac{1 - \lambda}{1 + \lambda}} \, d \sigma
\right)^{\frac{1 + \lambda}{2}}
\le
\frac{1}{ (1 - \lambda^{2})^{\frac{1 - \lambda}{2}} }
\trinorm{ \sigma }_{\lambda}.
\end{equation}
This inequality is valid even if $\lambda = 1$ clearly.

\begin{proposition}\label{prop:conv}
Let $\sigma \in \mathcal{E}_{\lambda}(\Omega)$.
\begin{enumerate*}[label=(\roman*)]
\item
If $\{ f_{k} \} \subset L^{\infty}(\Omega; \sigma)$ is a bounded sequence of nonnegative functions,
and if $f_{k} \to f$ $\sigma$-a.e. in $\Omega$,
then, $\mathbf{d}_{\lambda}(f \sigma, f_{k} \sigma) \to 0$.
\item
There exists a sequence $\{ \sigma_{k} \} \subset S_{c}(\Omega)$ such that
$\sigma_{k} \uparrow \sigma$ and $\mathbf{d}_{\lambda}(\sigma, \sigma_{k}) \to 0$.
\end{enumerate*}
\end{proposition}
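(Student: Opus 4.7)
The plan is to establish part (i) by using the two-sided estimate \eqref{eqn:COV} to convert the definition of $\trinorm{\cdot}_{\lambda}$ (which involves a supremum over $H_{0}^{1}$-test functions) into an explicit integral against $\sigma$ that is amenable to dominated convergence. Part (ii) will then follow by combining part (i) with the decomposition in Theorem \ref{thm:approximation}.

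For (i), I would set $M := \sup_{k} \| f_{k} \|_{L^{\infty}(\Omega; \sigma)}$, which is finite by assumption and satisfies $\| f \|_{L^{\infty}(\Omega; \sigma)} \le M$ by Fatou's lemma, and put $g_{k} := |f - f_{k}|$, so that $g_{k} \le 2M$ and $g_{k} \to 0$ $\sigma$-a.e. The case $\lambda = 1$ is immediate from $\mathbf{d}_{1}(f\sigma, f_{k}\sigma) = \int_{\Omega} g_{k} \, d\sigma$ together with $\sigma(\Omega) < \infty$ and dominated convergence. For $0 \le \lambda < 1$, I would apply the lower bound in \eqref{eqn:COV} to the measure $g_{k} \sigma \in \mathcal{M}_{0}^{+}(\Omega)$:
\[
\trinorm{g_{k} \sigma}_{\lambda}^{2/(1+\lambda)}
\le
\int_{\Omega} \bigl( G_{\Omega}(g_{k} \sigma) \bigr)^{(1-\lambda)/(1+\lambda)} g_{k} \, d\sigma.
\]
Since $g_{k} \le 2M$, we have $G_{\Omega}(g_{k}\sigma) \le 2M \, G_{\Omega}\sigma$ pointwise, so the integrand is dominated by a constant multiple of $(G_{\Omega}\sigma)^{(1-\lambda)/(1+\lambda)}$, which lies in $L^{1}(\Omega; \sigma)$ by the upper bound in \eqref{eqn:COV} together with $\sigma \in \mathcal{E}_{\lambda}(\Omega)$. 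Because $g_{k} \to 0$ $\sigma$-a.e.\ (and the dominating function is $\sigma$-a.e.\ finite), dominated convergence pushes the right-hand side to zero.

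For (ii), Theorem \ref{thm:approximation} provides an increasing sequence of compact sets $\{ F_{k} \}$ with $\sigma_{k} := \mathbf{1}_{F_{k}} \sigma \in S_{c}(\Omega)$ and $\sigma\bigl( \Omega \setminus \bigcup_{k} F_{k} \bigr) = 0$. Then $\sigma_{k} \uparrow \sigma$ by construction and $|\sigma - \sigma_{k}| = \mathbf{1}_{\Omega \setminus F_{k}} \sigma$. Applying (i) with $f \equiv 0$ and $f_{k} := \mathbf{1}_{\Omega \setminus F_{k}}$ — a sequence bounded by $1$ and converging to $0$ $\sigma$-a.e.\ — yields $\mathbf{d}_{\lambda}(\sigma, \sigma_{k}) = \trinorm{\mathbf{1}_{\Omega \setminus F_{k}} \sigma}_{\lambda} \to 0$.

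The main obstacle is the first reduction. The definition of $\trinorm{\cdot}_{\lambda}$ is a supremum over $H_{0}^{1}$-functions, so a direct argument using pointwise convergence plus the trace inequality would demand uniform integrability of $\{ |\varphi|^{1-\lambda} \}$ against the vanishing measures $g_{k}\sigma$, which is genuinely delicate without extra structure. The identity \eqref{eqn:COV} rescues the argument by rewriting the trinorm as a concrete Green-potential integral, after which the natural dominating function $(G_{\Omega}\sigma)^{(1-\lambda)/(1+\lambda)}$ belongs to $L^{1}(\Omega;\sigma)$ precisely when $\sigma \in \mathcal{E}_{\lambda}(\Omega)$, and dominated convergence applies cleanly.
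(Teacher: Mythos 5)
Your proof is correct and follows essentially the same route as the paper: form $g_k = |f - f_k|$, bound $G_\Omega[g_k\sigma]$ by a constant multiple of $G_\Omega\sigma$, and use the lower bound in \eqref{eqn:COV} together with dominated convergence (and Theorem \ref{thm:approximation} for (ii)). Your write-up is a bit more explicit than the paper's — e.g.\ noting $\|f\|_{L^\infty(\sigma)}\le M$ via Fatou and phrasing (ii) as an application of (i) — but there is no substantive difference in the argument.
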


\begin{proof}
Set $g_{k} = |f - f_{k}|$. By assumption, there is a constant $M$ independent of $k$ such that
$g_{k}(x) \le M$ for $\sigma$-a.e. $x \in \Omega$. Therefore,
\[
0 \le G_{\Omega}[g_{k} \sigma](x)
\le
M G_{\Omega}\sigma(x),
\quad \forall x \in \Omega.
\]
By \eqref{eqn:COV} and the dominated convergence theorem,
\[
\int_{\Omega} G_{\Omega}[g_{k} \sigma]^{ \frac{1 - \lambda}{1 + \lambda} } g_{k} \, d \sigma
\to
0.
\]
The second statement follows from Theorem \ref{thm:approximation}.
\end{proof}

The following proposition shows that $\mathcal{E}_{\lambda}(\Omega)$ has a rich structure.

\begin{proposition}
Let $0 \le \lambda \le 1$.
\begin{enumerate*}[label=(\roman*)]
\item\label{cond:HtoE}
If $\mu \in H^{-1}(\Omega)^{+}$, then
$(G_{\Omega} \mu)^{\lambda} \mu \in \mathcal{E}_{\lambda}(\Omega)$.
\item\label{cond:EtoH}
If $\sigma \in \mathcal{E}_{\lambda}(\Omega) \setminus \{ 0 \}$,
then $(G_{\Omega} \sigma)^{ \frac{- \lambda}{1 + \lambda} } \sigma \in H^{-1}(\Omega)^{+}$.
\end{enumerate*}
\end{proposition}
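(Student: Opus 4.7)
My plan is to derive both claims from the two-sided bound \eqref{eqn:COV} combined with an elementary pointwise inequality applied to $G_{\Omega}\sigma$. Both claims are trivial at $\lambda = 0$ (the weight collapses to $1$ and the conclusion coincides with the hypothesis), so I restrict to $\lambda \in (0,1]$.

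For part (i), I would apply Young's inequality with conjugate exponents $1/(1-\lambda)$ and $1/\lambda$ to obtain pointwise
\[
|\varphi|^{1-\lambda}(G_\Omega \mu)^{\lambda} \le (1-\lambda)|\varphi| + \lambda\, G_\Omega \mu
\]
(the $\lambda = 1$ case reducing to equality). Integrating against $\mu$, using the $H^{-1}$--$H^1_0$ duality $\int_\Omega |\varphi|\,d\mu \le \|\mu\|_{H^{-1}(\Omega)}\|\nabla|\varphi|\|_{L^2(\Omega)}$ and the energy identity $\int_\Omega G_\Omega \mu\,d\mu = \|\mu\|_{H^{-1}(\Omega)}^2$, and then taking the supremum over admissible $\varphi$, yields
\[
\trinorm{(G_\Omega\mu)^{\lambda}\mu}_{\lambda} \le (1-\lambda)\|\mu\|_{H^{-1}(\Omega)} + \lambda\|\mu\|_{H^{-1}(\Omega)}^2 < \infty.
\]
Absolute continuity with respect to the capacity is inherited from $\mu$ through the Borel-measurable density $(G_\Omega\mu)^{\lambda}$.

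For part (ii), set $\tau := (G_\Omega\sigma)^{-\lambda/(1+\lambda)}\sigma$. Since any nonnegative Radon measure of finite energy $\int G_\Omega \tau\,d\tau$ defines an element of $H^{-1}(\Omega)^{+}$ via Cauchy--Schwarz in the energy inner product, it suffices to bound this energy. The crux is the pointwise estimate
\[
G_\Omega \tau \le (1 + \lambda) (G_\Omega \sigma)^{1/(1 + \lambda)}.
\]
Setting $u := G_\Omega\sigma$ and $s := 1/(1+\lambda) \in [1/2, 1)$, a formal differentiation gives
\[
-\Delta(u^s) = s u^{s-1}\sigma + s(1-s) u^{s-2}|\nabla u|^2 \ge s u^{s-1}\sigma,
\]
so $u^s$ is a nonnegative supersolution of $-\Delta w = s u^{s-1}\sigma$ with zero boundary data, and weak comparison gives $u^s \ge s\, G_\Omega(u^{s-1}\sigma) = s\, G_\Omega \tau$. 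Integrating the pointwise bound against $\tau$ and invoking the right-hand inequality of \eqref{eqn:COV} then yields
\[
\int_\Omega G_\Omega \tau\,d\tau \le (1+\lambda)\int_\Omega (G_\Omega\sigma)^{(1-\lambda)/(1+\lambda)} \, d\sigma \le C\,\trinorm{\sigma}_{\lambda}^{2/(1+\lambda)} < \infty.
\]

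The main obstacle is making the chain-rule inequality $-\Delta(u^s) \ge s u^{s-1}\sigma$ rigorous for general $\sigma \in \mathcal{E}_{\lambda}(\Omega)$, since $u$ has only limited Sobolev regularity and the term $u^{s-2}|\nabla u|^2$ has no a priori distributional meaning. I would first establish the estimate for $\sigma \in S_c(\Omega)$, where $u$ is bounded and (since $s \ge 1/2$) $u^s \in H_0^1(\Omega)$; here the computation can be rigorized by working with the Lipschitz truncation $(u + \varepsilon)^s - \varepsilon^s$, testing $-\Delta u = \sigma$ against a suitable nonnegative function and letting $\varepsilon \downarrow 0$. The general case then follows via the monotone approximation $\sigma_k \uparrow \sigma$ with $\sigma_k \in S_c(\Omega)$ supplied by Theorem \ref{thm:approximation}: one has $u_k = G_\Omega \sigma_k \uparrow u$, so monotone convergence lets one pass to the limit on both sides of the pointwise bound and then in the energy integral.
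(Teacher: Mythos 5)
Your proof is correct but follows a genuinely different route from the paper's, particularly for part~(ii).

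For part~(i) you use the pointwise Young inequality $|\varphi|^{1-\lambda}(G_\Omega\mu)^\lambda \le (1-\lambda)|\varphi| + \lambda\,G_\Omega\mu$ and then integrate; the paper instead applies H\"older's inequality directly to the integral, yielding a single product $\|\mu\|_{H^{-1}}^{1+\lambda}\|\nabla\varphi\|_{L^2}^{1-\lambda}$ rather than your sum. Both give finiteness, and the two are equivalent up to the usual H\"older-versus-Young translation; the paper's version is marginally cleaner as a norm estimate.

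For part~(ii) the two arguments diverge more substantially. The paper never touches the Green potential of $\tau := (G_\Omega\sigma)^{-\lambda/(1+\lambda)}\sigma$. Instead it writes
$\varphi(G_\Omega\sigma_k)^{-\lambda/(1+\lambda)} = \bigl(\varphi/(G_\Omega\sigma_k)^{1/2}\bigr)\cdot(G_\Omega\sigma_k)^{(1-\lambda)/(2(1+\lambda))}$,
applies Cauchy--Schwarz, controls the first factor by the weighted trace inequality $\int \varphi^2\,d\sigma_k/G_\Omega\sigma_k \le \int|\nabla\varphi|^2$ (the Picone inequality \eqref{eqn:picone}), and the second factor by \eqref{eqn:COV}. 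This goes straight to the bound $\int\varphi\,d\tau \le C\|\nabla\varphi\|_{L^2}$ with no chain rule, no comparison principle, and no regularity issues for $u^s$. Your argument instead establishes the pointwise estimate $G_\Omega\tau \le (1+\lambda)(G_\Omega\sigma)^{1/(1+\lambda)}$ via the supersolution property of $u^s$ and bounds the energy $\int G_\Omega\tau\,d\tau$. This works and has independent interest (it is exactly the iterated-potential estimate that also underlies \eqref{eqn:pointwise}), but it forces you to rigorize the chain rule and handle the $\varepsilon$-regularization and the $\sigma_k\uparrow\sigma$ limit; your sketch of that last step ("monotone convergence lets one pass to the limit on both sides") is a bit glib, since $u_k^{s-1}\mathbf{1}_{F_k}\sigma$ is not monotone in $k$ — one should instead bound $u^{s-1}\mathbf{1}_{F_k}\sigma \le u_k^{s-1}\mathbf{1}_{F_k}\sigma$ and apply monotone convergence to the left-hand sequence. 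Also, your remark that $u^s\in H_0^1(\Omega)$ already for $\sigma\in S_c$ is not obvious (near $\partial\Omega$, $u^{s-1}|\nabla u|$ need not be square integrable), but you do not actually use it, since the regularization $(u+\varepsilon)^s-\varepsilon^s$ sidesteps the issue. In short: same conclusion, but the paper's Picone route is shorter and avoids the supersolution machinery; your route gives a sharper, more geometric picture via the pointwise Green-potential bound.
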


\begin{proof}
Since $\sigma \in \M_{0}^{+}(\Omega)$,
the measures $(G_{\Omega} \mu)^{\lambda} \mu$
and $(G_{\Omega} \sigma)^{ \frac{- \lambda}{1 + \lambda} } \sigma$
are well-defined in the sense of integrals of quasicontinuous functions.
\ref{cond:HtoE}
If $\lambda = 1$, then the statement holds clearly.
Assume that $0 \le \lambda < 1$. By H\"{o}lder's inequality,
\[
\begin{split}
\int_{\Omega} |\varphi|^{1 - \lambda} (G_{\Omega} \mu)^{\lambda} \, d \mu
& \le
\left( \int_{\Omega} G_{\Omega} \mu \, d \mu \right)^{\lambda}
\left( \int_{\Omega} |\varphi|  \, d \mu \right)^{1 - \lambda}
\\
& \le
\| \sigma \|_{H^{-1}(\Omega)}^{1 + \lambda} \| \nabla \varphi \|_{L^{2}(\Omega)}^{1 - \lambda},
\quad \forall \varphi \in H_{0}^{1}(\Omega).
\end{split}
\]
\ref{cond:EtoH}
Fix $\varphi \in C_{c}^{\infty}(\Omega)$ such that $\varphi \ge 0$ in $\Omega$.
Let $\{ \sigma_{k} \} \subset S_{c}(\Omega)$ be a sequence of measures in Theorem \ref{thm:approximation}.
By \eqref{eqn:picone} and \eqref{eqn:COV},
\[
\begin{split}
\int_{\Omega} \frac{ \varphi }{ (G_{\Omega} \sigma_{k})^{ \frac{\lambda}{1 + \lambda} } } \, d \sigma_{k}
& \le
\left(
\int_{\Omega} (G_{\Omega} \sigma_{k} )^{ \frac{1 - \lambda}{1 + \lambda}} \, d \sigma_{k}
\right)^{\frac{1}{2}}
\left(
\int_{\Omega} \varphi^{2} \, \frac{d \sigma_{k}}{G_{\Omega} \sigma_{k}}
\right)^{\frac{1}{2}}
\\
& \le
\frac{1}{ (1 - \lambda^{2})^{\frac{1 - \lambda}{2 (1 + \lambda)}} }
\trinorm{ \sigma }_{\lambda}^{\frac{1}{1 + \lambda}}
\left(
\int_{\Omega} |\nabla \varphi|^{2} \, dx
\right)^{\frac{1}{2}}.
\end{split}
\]
The monotone convergence theorem yields,
\[
\begin{split}
\int_{\Omega} \frac{ \varphi }{ (G_{\Omega} \sigma)^{ \frac{\lambda}{1 + \lambda} } } \, d \sigma
& \le
\frac{1}{ (1 - \lambda^{2})^{\frac{1 - \lambda}{2 (1 + \lambda)}} }
\trinorm{ \sigma }_{\lambda}^{\frac{1}{1 + \lambda}}
\left(
\int_{\Omega} |\nabla \varphi|^{2} \, dx
\right)^{\frac{1}{2}}.
\end{split}
\]
Then, the assertion follows
from the inequality $| \nabla |\cdot| | \le |\nabla \cdot|$.
\end{proof}

Finally, we observe examples of functions in $\mathcal{E}_{\lambda}(\Omega)$.
For one-dimensional cases, we refer to the paper by Sinnamon and Stepanov \cite{MR1395069}.

\begin{example}\label{example:BO}
Suppose that $\Omega \subset \R^{N}$ is bounded. Set
\begin{equation}\label{eqn:sobolev}
2^{*}
:=
\begin{cases}
\frac{2N}{N - 2} & N \ge 3,
\\
\text{any finite constant} & N = 2,
\\
+ \infty & N = 1.
\end{cases}
\end{equation}
Let $0 \le f \in L^{m}(\Omega)$, where $m$ is a constant satisfying \eqref{eqn:cond_BO}.
By Sobolev's inequality and H\"{o}lder's inequality,
\[
\begin{split}
\int_{\Omega} |\varphi|^{1 - \lambda} f \, dx
& \le
C \| f \|_{L^{m}(\Omega)} \| \varphi \|_{L^{2^{*}}(\Omega)}^{1 - \lambda}
\\
& \le
C' \| f \|_{L^{m}(\Omega)} \| \nabla \varphi \|_{L^{2}(\Omega)}^{1 - \lambda},
\quad \forall \varphi \in H_{0}^{1}(\Omega).
\end{split}
\]
Therefore, $\trinorm{f}_{\lambda} \le C \| f \|_{L^{m}(\Omega)}$.
In particular, if $f_{k} \to f$ strongly in $L^{m}(\Omega)$,
then $\mathbf{d}_{\lambda}(f, f_{k}) \to 0$.
\end{example}

\begin{example}
Assume that the following Hardy inequality holds:
\begin{equation}\label{eqn:hardy}
\int_{\Omega} \frac{ \varphi^{2} }{\delta^{2}} \, dx \le C \int_{\Omega} |\nabla \varphi|^{2} \, dx,
\quad
\forall \varphi \in C_{c}^{\infty}(\Omega),
\end{equation}
where $\delta(x) := \dist(x, \del \Omega)$.
Then, for any $0 \le \lambda \le 1$,
\[
\begin{split}
\int_{\Omega} |\varphi|^{1 - \lambda} f \, dx
& \le
\| f \delta^{1 - \lambda} \|_{L^{ \frac{2}{1 + \lambda} }(\Omega)}
\left\| |\varphi|^{1 - \lambda} \delta^{\lambda - 1} \right\|_{L^{\frac{2}{1 - \lambda}}(\Omega)}
\\
& =
\| f \|_{L^{\frac{2}{1 + \lambda}}(\Omega; w_{\lambda})}
\left\| \frac{\varphi}{\delta} \right\|_{L^{2}(\Omega)}^{1 - \lambda}
\\
& \le
C^{1 - \lambda} \| f \|_{L^{\frac{2}{1 + \lambda}}(\Omega; w_{\lambda})}
\| \nabla \varphi \|_{L^{2}(\Omega)}^{1 - \lambda},
\quad \forall \varphi \in H_{0}^{1}(\Omega),
\end{split}
\]
where $w_{\lambda} := \delta^{ \frac{2(1 - \lambda)}{1 + \lambda}}$.
Thus,
$\trinorm{f}_{\lambda} \le C \| f \|_{L^{ \frac{2}{1 + \lambda} }(\Omega; w_{\lambda})}$.
We note that there is no inclusion between
$L^{ \frac{2}{1 + \lambda} }(\Omega; w_{\lambda})$ and $L^{1}(\Omega)$
(see, Example \ref{example:ext+int} below).

Lewis \cite{MR946438} and Wannebo  \cite{MR1010807} proved \eqref{eqn:hardy} 
under the capacity density condition
\begin{equation}\label{eqn:CDC}
\exists c > 0 \ \text{s.t.} \
\forall x \in \partial \Omega, \ \forall r > 0, \
\frac{ \capacity(\cl{B(x, r)} \setminus \Omega, B(x, 2r)) }{ \capacity(\cl{B(x, r)}, B(x, 2r)) } \ge c.
\end{equation}
For further references, we refer to  \cite[Chapter 15]{MR2777530}.
\end{example}

\begin{example}\label{example:ext+int}
Assume \eqref{eqn:hardy} and that there exists $\theta \in (0, 1)$ such that
\begin{equation}\label{eqn:int_dist}
\int_{\Omega} \delta^{ - \theta} \,dx < \infty.
\end{equation}
Then, $\delta^{-s} \in L^{ \frac{2}{1 + \lambda} }(\Omega; w_{\lambda})$
(and thus $\delta^{-s} \in \mathcal{E}_{\lambda}(\Omega)$) if $0 \le \lambda \le 1$ and
\[
\lambda
\le
\frac{2 - 2s + \theta}{2 - \theta}.
\]

Two sufficient conditions for \eqref{eqn:int_dist} can be found in \cite{MR1668136}.
If $\Omega$ has interior cone property, then \eqref{eqn:int_dist} holds if and only if $\theta < 1$.
Therefore, $\delta^{-s} \in \mathcal{E}_{\lambda}(\Omega)$ if \eqref{eqn:DHR} holds.
In particular, $\delta^{-1}$ is not integrable, but $\delta^{-1} \in \mathcal{E}_{\lambda}(\Omega)$
for any $0 \le \lambda < 1$.
The other condition is more complicated.
A bounded domain $\Omega$ is called \textit{John domain}
if there are $x_{0} \in \Omega$ and $C_{J} \ge 1$ such that
each $x \in \Omega$ can be connected to $x_{0}$ by a rectifiable curve $\gamma \subset \Omega$
with
\[
l( \gamma(x, z) ) \le C_{J} \, \delta(z) \quad \forall z \in \gamma,
\]
where $\gamma(x, z)$ is the subarc of $\gamma$ from $x$ to $z$
and $l(\gamma(x, z))$ is the length of $\gamma(x, z)$.
Trocenko \cite{MR624419} proved that if $\Omega$ is a bounded John domain,
then \eqref{eqn:int_dist} holds for some $\theta > 0$.
For another more geometric proof, see \cite[Lemma 4.4]{MR2125540}.
In this case, $\delta^{-\theta} \in \mathcal{E}_{\lambda}(\Omega)$ for all $0 \le \lambda \le 1$.

We note that these conditions are independent of \eqref{eqn:CDC},
because \eqref{eqn:CDC} is a condition for the outside of $\Omega$.
\end{example}

\section{Elliptic equations with singular nonlinearity}\label{sec:SEP}

Fix constants $0 < \alpha \le \beta < \infty$.
We define the set of $N \times N$-matrix-valued functions
$M(\alpha, \beta, \Omega)$ by
\[
M(\alpha, \beta, \Omega)
:=
\left\{
A \in L^{\infty}(\Omega)^{N \times N} \colon
\begin{aligned}
A(x) \xi \cdot \xi & \geq \alpha |\xi|^{2},
\\
A^{-1}(x) \xi \cdot \xi & \ge \beta^{-1} |\xi|^{2},
\end{aligned}
\
\begin{aligned}
\forall \xi \in \R^{N}, \
\text{a.e.} \ x \in \Omega
\end{aligned}
\right\}.
\]
If $A \in M(\alpha, \beta, \Omega)$, then
the differential operator
\[
\mathcal{L} u := - \divergence (A(x) \nabla u)
\]
satisfies
\begin{align}
\left| \langle \mathcal{L} u, v \rangle \right|
& \le
\beta \| \nabla u \|_{L^{2}(\Omega)} \| \nabla v \|_{L^{2}(\Omega)},
\quad
\forall u, v \in H_{0}^{1}(\Omega),
\label{eqn:boundedness}
\\
\langle \mathcal{L} u, u \rangle 
&\ge
\alpha \| \nabla u \|_{L^{2}(\Omega)}^{2},
\quad
\forall u \in H_{0}^{1}(\Omega).
\label{eqn:coercivity}
\end{align}
By the Lax-Milgram theorem,
for any $f \in H^{-1}(\Omega)$, there exists a unique function $u \in H_{0}^{1}(\Omega)$ satisfying
\[
\langle \mathcal{L} u, \varphi \rangle = \langle f, \varphi \rangle
\quad \forall \varphi \in H_{0}^{1}(\Omega).
\]
Assume also that $f$ is a nonnegative distribution.
Then, $u$ is a supersolution to $\mathcal{L}u = 0$ in $\Omega$,
and its \textit{lsc-regularization}
\[
u^{*}(x)
:=
\lim_{r \to 0} \essinf_{B(x, r)} u
\]
gives a quasicontinuous representative of $u$.
For details and more information, including De Giorgi-Nash-Moser theory, we refer to \cite{MR2305115}.
We always use this representative below.

The pointwise estimate for Green functions of divergence form elliptic operators
was established in \cite{MR161019} (see also \cite{MR657523} for non-symmetric $A$).
The following two-sided estimate is one corollary of it.

\begin{theorem}[{\cite[Corollary 7.1]{MR161019}}]\label{thm:LSW}
Let $A \in M(\alpha, \beta, \Omega)$, and let $K$ be a compact subset in $\Omega$.
Assume that $\sigma$ is a nonnegative Radon measure in $H^{-1}(\Omega)$
such that $\spt \sigma \subset K$. 
Let $U$ be the weak solution to
\begin{equation}\label{eqn:poisson}
\begin{cases}
\displaystyle
- \divergence (A(x) \nabla U) = \sigma \quad \text{in} \ \Omega,
\\
U \in H_{0}^{1}(\Omega).
\end{cases}
\end{equation}
Then
\[
\frac{1}{C} G_{\Omega} \sigma
\le
U
\le C G_{\Omega} \sigma,
\quad \text{on} \ K,
\]
where $C$ is a positive constant depending only on $N$, $\alpha$, $\beta$, $\Omega$ and $K$.
\end{theorem}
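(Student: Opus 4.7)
The plan is to deduce Theorem \ref{thm:LSW} from the pointwise two-sided bound on the Green's function of $\mathcal{L}$ that underlies the cited work of Littman, Stampacchia, and Weinberger. That result constructs, for each compact $K' \Subset \Omega$, a Green's function $G_\Omega^{A}(x,y)$ for the operator $\mathcal{L}$ satisfying
\[
C_1^{-1}\, G_\Omega(x,y) \le G_\Omega^{A}(x,y) \le C_1\, G_\Omega(x,y), \quad x, y \in K',\ x \ne y,
\]
for some constant $C_1 = C_1(N, \alpha, \beta, \Omega, K')$. First I would fix such a $K'$ whose interior contains $K$, so that every evaluation point $x \in K$ and every source point $y \in \spt \sigma \subset K$ sits inside $K'$.

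The second ingredient is the potential representation
\[
U(x) = \int_\Omega G_\Omega^{A}(x,y)\, d\sigma(y) \quad \text{for q.e. } x \in \Omega.
\]
Granting this, substituting the Green's function comparison above and integrating against $\sigma$ immediately yields
\[
C_1^{-1}\, G_\Omega \sigma(x) \le U(x) \le C_1\, G_\Omega \sigma(x) \quad \text{for q.e. } x \in K,
\]
and since all quantities involved agree with the lsc-regularizations chosen as pointwise representatives, the inequality extends to every point of $K$ with $C = C_1$.

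The main obstacle is to justify this potential representation when $\sigma$ is merely a measure rather than an $L^2$ function. My plan is to approximate $\sigma$ from below by the smooth truncations $\sigma_k := \mathbf{1}_{F_k}\sigma \in S_c(\Omega)$ furnished by Theorem \ref{thm:approximation}, after intersecting each $F_k$ with $K$ so that $\spt \sigma_k \subset K$. For each $\sigma_k$ the potential $G_\Omega^{A}\sigma_k$ is bounded, belongs to $H_0^1(\Omega)$, and by Fubini against smooth test functions solves $\mathcal{L} U_k = \sigma_k$ weakly; uniqueness in \eqref{eqn:poisson} thus identifies it with the weak solution $U_k$. The coercivity estimate \eqref{eqn:coercivity} yields $U_k \to U$ strongly in $H_0^1(\Omega)$, while $G_\Omega^{A}\sigma_k \uparrow G_\Omega^{A}\sigma$ pointwise by monotone convergence. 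Passing to quasicontinuous representatives along a subsequence identifies $U = G_\Omega^{A}\sigma$ q.e., after which the Green's function comparison finishes the proof. The elliptic content is entirely packaged in the Littman-Stampacchia-Weinberger bound; the remaining work is the bookkeeping needed to move from the distributional formulation to a pointwise identity, and a mild potential-theoretic argument to ensure that the pointwise representative singled out by the paper coincides with the Green potential of $\sigma$.
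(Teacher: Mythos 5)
The paper does not supply a proof of this theorem: it is stated as a direct citation of Littman--Stampacchia--Weinberger's Corollary~7.1, so there is no internal argument to compare against. Your reconstruction is the standard and correct way to read that citation: the two-sided comparison of the operator's Green function with the Laplacian's Green function on compact subsets is precisely the content of the cited result, and your potential-representation step (approximating $\sigma$ from below by $S_c$ measures, identifying $U_k$ with $G_\Omega^A\sigma_k$ via Fubini, passing to the limit by coercivity and monotone convergence, then upgrading from q.e.\ to everywhere on $K$ via lsc-regularization of supersolutions) is the natural bookkeeping needed to pass from a pointwise Green-kernel bound to a bound on the solution with measure data. The only minor point worth flagging is that the comparison constant you obtain depends on your auxiliary compact $K'$, but since $K'$ can be chosen depending only on $K$ and $\Omega$, this matches the dependence asserted in the theorem.
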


\begin{remark}\label{rem:boundary_behavior}
Note that the constant $C$ is depending on $K$.
The boundary behavior of $U$ is not comparable to $G_{\Omega} \sigma$ in general
(see, e.g., \cite[Problem 3.9]{MR1814364}).
However, if $\sigma \in S_{c}$, then $U$ is bounded on $\Omega$ from 
Theorem \ref{thm:LSW} and the complete maximum principle.
\end{remark}

We understand Eq. \eqref{eqn:DP} by the following weak sense.

\begin{definition}
Let $A \in M(\alpha, \beta, \Omega)$, $0 < \lambda \le 1$ and $\sigma \in \M_{0}^{+}(\Omega) \setminus \{ 0 \}$.
We say that a function $u \in H_{0}^{1}(\Omega)$ is a (finite energy) weak solution
to \eqref{eqn:DP} if $u > 0$ in $\Omega$ and
\[
\int_{\Omega} A \nabla u \cdot \nabla \varphi \, dx = \int_{\Omega} \frac{\varphi}{u^{\lambda}} \, d \sigma
\quad \forall \varphi \in C_{c}^{\infty}(\Omega).
\]
\end{definition}

\begin{remark}
From the structure of the lower order term,
we may assume that $\sigma$ is absolutely continuous with respect to the capacity without loss of generality.
For details, see \cite[Theorem 3.4]{MR2592976} and \cite[Remark 2.2]{MR3712944}.
\end{remark}

If there exists a weak solution $u$ to \eqref{eqn:DP} , then the embedding
$H_{0}^{1}(\Omega) \hookrightarrow L^{1}(\Omega; \sigma / u^{\lambda})$ is continuous.
Thus, it follows from the completeness of $L^{1}(\Omega; \sigma / u^{\lambda})$
that every $\varphi \in H_{0}^{1}(\Omega)$ is integrable with respect to $\sigma / u^{\lambda}$.
If $v$ is a weak solution to 
\begin{equation}\label{eqn:DPG}
\begin{cases}
\displaystyle
- \divergence (A(x) \nabla v) = \frac{ \nu }{v^{\lambda}}, \quad v > 0 \quad \text{in} \ \Omega,
\\
v \in H_{0}^{1}(\Omega),
\end{cases}
\end{equation}
and if $\sigma \le \nu$, then
\[
\int_{\Omega} A \nabla (u - v) \cdot \nabla (u - v)_{+} \, dx
=
\int_{\Omega} \left( \frac{f}{u^{\lambda}} - \frac{1}{v^{\lambda}} \right) (u - v)_{+} \, d \nu \le 0,
\]
where $f = \frac{d \sigma}{d \nu}$ is the Radon-Nikod\'{y}m derivative of $\sigma$ with respect to $\nu$.
By \eqref{eqn:coercivity}, $\nabla (u - v)_{+} = 0$ a.e. in $\Omega$.
Consequently, $u \le v$ in $H_{0}^{1}(\Omega)$.
In particular, if there exists a weak solution to \eqref{eqn:DP}, it is unique in $H_{0}^{1}(\Omega)$.

Below, we always assume that $\sigma \in \mathcal{E}_{\lambda}(\Omega)$.
This assumption is minimal in the following sense.

\begin{theorem}\label{thm:exist}
Let $A \in M(\alpha, \beta, \Omega)$,
and let $0 < \lambda \le 1$.
Then, there exists a unique weak solution $u \in H_{0}^{1}(\Omega)$ to \eqref{eqn:DP}
if and only if $\sigma \in \mathcal{E}_{\lambda}(\Omega) \setminus \{ 0 \}$.
Moreover,
\begin{equation}\label{eqn:energy_bound}
\frac{1}{\beta^{\frac{1}{1 + \lambda}}}
\trinorm{ \sigma }_{\lambda}^{\frac{1}{1 + \lambda}}
\le
\| \nabla u \|_{L^{2}(\Omega)}
\le
\frac{1}{\alpha^{\frac{1}{1 + \lambda}}}
\trinorm{ \sigma }_{\lambda}^{\frac{1}{1 + \lambda}}.
\end{equation}
Assume also that $\sigma \in S_{c}$.
Then, $u$ is bounded in $\Omega$ and
\begin{equation}\label{eqn:pointwise}
u(x) \le (1 + \lambda)^{\frac{1}{1 + \lambda}} U(x)^{\frac{1}{1 + \lambda}},
\quad \forall x \in \Omega,
\end{equation}
where $U$ is the weak solution to \eqref{eqn:poisson}.
\end{theorem}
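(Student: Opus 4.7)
The plan is to handle the four assertions in the theorem (necessity, existence, the two-sided energy estimate \eqref{eqn:energy_bound}, and the pointwise bound \eqref{eqn:pointwise}) in turn, noting that uniqueness is already established by the comparison argument preceding the statement. The skeleton is a double approximation on $\sigma$, together with a Picone-type test function manipulation, and a supersolution comparison.

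\textbf{Existence and the upper energy bound.}
Starting from $\sigma \in \mathcal{E}_{\lambda}(\Omega)$, Proposition \ref{prop:conv}(ii) supplies $\sigma_{k} \in S_{c}(\Omega)$ with $\sigma_{k} \uparrow \sigma$ and $\mathbf{d}_{\lambda}(\sigma, \sigma_{k}) \to 0$. For each $k$ I solve the doubly regularized problem $-\divergence(A \nabla u_{k,n}) = \sigma_{k}/(u_{k,n} + 1/n)^{\lambda}$ by Schauder's fixed-point theorem applied to $v \mapsto u$ with $-\divergence(A \nabla u) = \sigma_{k}/(|v| + 1/n)^{\lambda}$; the right-hand side lies in $H^{-1}(\Omega)^{+}$ because it is dominated by $n^{\lambda} \sigma_{k} \in S_{c}(\Omega)$. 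Comparison in $n$ and then in $k$ yields monotone limits $u_{k,n} \uparrow u_{k} \uparrow u$. Testing the $u_{k,n}$-equation with $u_{k,n}$ itself and using \eqref{eqn:coercivity} gives
\[
\alpha \|\nabla u_{k,n}\|_{L^{2}(\Omega)}^{2} \le \int_{\Omega} u_{k,n}^{1-\lambda}\, d\sigma_{k} \le \trinorm{\sigma}_{\lambda}\, \|\nabla u_{k,n}\|_{L^{2}(\Omega)}^{1-\lambda},
\]
so $\|\nabla u_{k,n}\|_{L^{2}(\Omega)} \le (\trinorm{\sigma}_{\lambda}/\alpha)^{1/(1+\lambda)}$ uniformly. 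The limit $u$ then lies in $H_{0}^{1}(\Omega)$ and realises the upper bound in \eqref{eqn:energy_bound}.

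\textbf{Necessity and the lower energy bound.}
Conversely, if $u \in H_{0}^{1}(\Omega)$ is a weak solution, the continuity of $H_{0}^{1}(\Omega) \hookrightarrow L^{1}(\Omega; \sigma/u^{\lambda})$ allows $u$ itself as a test function, yielding the energy identity $\int_{\Omega} A\nabla u \cdot \nabla u\, dx = \int_{\Omega} u^{1-\lambda}\, d\sigma$. To upgrade this to the full trace bound $\trinorm{\sigma}_{\lambda} \le \beta \|\nabla u\|_{L^{2}(\Omega)}^{1+\lambda}$, I test with $\varphi^{2}/(u+\epsilon)$ for $\varphi \in C_{c}^{\infty}(\Omega)$ nonnegative, apply the Picone-type inequality \eqref{eqn:picone}, and send $\epsilon \to 0$ to derive
\[
\int_{\Omega} \frac{\varphi^{2}}{u^{1+\lambda}}\, d\sigma \le \beta \|\nabla \varphi\|_{L^{2}(\Omega)}^{2}.
\]
H\"older's inequality with exponents $2/(1-\lambda)$ and $2/(1+\lambda)$ applied to the factorisation $\varphi^{1-\lambda} = (\varphi^{2}/u^{1+\lambda})^{(1-\lambda)/2}\, u^{(1-\lambda^{2})/2}$, combined with the energy identity, then gives $\int_{\Omega} \varphi^{1-\lambda}\, d\sigma \le \beta \|\nabla \varphi\|_{L^{2}(\Omega)}^{1-\lambda} \|\nabla u\|_{L^{2}(\Omega)}^{1+\lambda}$, which is the lower bound and in particular forces $\sigma \in \mathcal{E}_{\lambda}(\Omega)$.

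\textbf{Pointwise bound and main obstacle.}
When $\sigma \in S_{c}(\Omega)$, Remark \ref{rem:boundary_behavior} gives $U \in H_{0}^{1}(\Omega) \cap L^{\infty}(\Omega)$. Setting $w := ((1+\lambda) U)^{1/(1+\lambda)}$, a chain-rule computation produces
\[
- \divergence(A\nabla w) - \frac{\sigma}{w^{\lambda}} = \frac{\lambda}{((1+\lambda)U)^{(1+2\lambda)/(1+\lambda)}}\, A\nabla U \cdot \nabla U \ge 0
\]
in the weak sense, so $w$ is a nonnegative supersolution of \eqref{eqn:DP} with $w \in H_{0}^{1}(\Omega)$, and the same comparison argument that supplied uniqueness gives $u \le w$, i.e.\ \eqref{eqn:pointwise}. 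The hardest step in the whole scheme is the passage $k \to \infty$ in the approximation: making sense of $\sigma/u_{k}^{\lambda}$ in the limit requires uniform strict positivity of the $u_{k}$ together with $\sigma$-a.e.\ convergence, which are supplied respectively by the monotonicity $u_{k} \ge u_{1} > 0$ on $\spt \sigma_{1}$ (strong minimum principle) and by Corollary \ref{cor:a.e.conv}; dominated convergence in $\int \varphi u_{k}^{-\lambda}\, d\sigma_{k}$ then closes the argument.
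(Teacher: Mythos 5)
Your skeleton — approximate $\sigma$ from inside by $\sigma_k \in S_c(\Omega)$, solve doubly regularized problems by a fixed-point argument, pass to monotone limits, prove necessity by a trace inequality, and close with a supersolution comparison — is the standard Boccardo--Orsina style route, and the comparison-principle argument preceding the theorem indeed disposes of uniqueness. However, there is a concrete gap in the way you derive the lower energy bound, and two smaller places where the argument is glossed.

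\textbf{The lower bound via Picone does not give the stated constant for non-symmetric $A$.} The Picone-type inequality \eqref{eqn:picone} quoted in the paper is for the Laplacian. Its natural extension to $-\divergence(A\nabla\cdot)$, namely $A\nabla u\cdot\nabla(\varphi^2/u) \le A\nabla\varphi\cdot\nabla\varphi$, holds only when $A$ is \emph{symmetric}: it is a consequence of the Cauchy--Schwarz inequality for the bilinear form $\xi,\eta\mapsto A\xi\cdot\eta$, which requires $A = A^T$. The class $M(\alpha,\beta,\Omega)$ in the paper does \emph{not} assume symmetry. For general coercive $A$, plugging in $\varphi^2/(u+\epsilon)$ and using only \eqref{eqn:boundedness}--\eqref{eqn:coercivity} yields $\int \varphi^2 u^{-(1+\lambda)}\,d\sigma \le (\beta^2/\alpha)\|\nabla\varphi\|_{L^2}^2$, and after your Hölder step the resulting constant is $(\beta^2/\alpha)^{(1-\lambda)/2}\beta^{(1+\lambda)/2}$, not $\beta$. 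The sharp constant $\beta$ in \eqref{eqn:energy_bound} follows from a more elementary argument that avoids Picone entirely: using $u$ and a nonnegative $\varphi\in H_0^1(\Omega)$ as test functions (both admissible, as the paper notes, because $H_0^1(\Omega)\hookrightarrow L^1(\Omega;\sigma/u^\lambda)$ is continuous once a weak solution exists), one gets $\int u^{1-\lambda}\,d\sigma = \int A\nabla u\cdot\nabla u\,dx \le \beta\|\nabla u\|_{L^2}^2$ and $\int \varphi u^{-\lambda}\,d\sigma = \int A\nabla u\cdot\nabla\varphi\,dx \le \beta\|\nabla u\|_{L^2}\|\nabla\varphi\|_{L^2}$; then Hölder with exponents $\frac{1}{1-\lambda}$ and $\frac{1}{\lambda}$ applied to $\varphi^{1-\lambda} = (\varphi u^{-\lambda})^{1-\lambda}(u^{1-\lambda})^\lambda$ gives
\[
\int_\Omega \varphi^{1-\lambda}\,d\sigma \le \Bigl(\int_\Omega \tfrac{\varphi}{u^\lambda}\,d\sigma\Bigr)^{1-\lambda}\Bigl(\int_\Omega u^{1-\lambda}\,d\sigma\Bigr)^{\lambda} \le \beta\,\|\nabla u\|_{L^2}^{1+\lambda}\|\nabla\varphi\|_{L^2}^{1-\lambda},
\]
which is exactly $\trinorm{\sigma}_\lambda \le \beta\|\nabla u\|_{L^2}^{1+\lambda}$ with no symmetry needed. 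You should replace your Picone-based lower bound with this.

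Two further points to tighten. First, in the pointwise estimate you assert $w := ((1+\lambda)U)^{1/(1+\lambda)} \in H_0^1(\Omega)$ and invoke the uniqueness comparison directly; since $\nabla w = c\,U^{-\lambda/(1+\lambda)}\nabla U$ with a negative exponent, $\nabla w\in L^2$ is not immediate and has to be established (e.g.\ by testing the $U$-equation with truncations of $U^{2/(1+\lambda)-1}$), or one should instead compare $w$ with the bounded approximants $u_{k,n}$ and pass to the limit. Second, in the passage $k\to\infty$, "dominated convergence in $\int\varphi u_k^{-\lambda}d\sigma_k$" needs a uniform dominant, and $u_k\ge u_1>0$ only controls the portion of the integral on $\spt\sigma_1$; since $\spt\sigma_k$ grows with $k$, this has to be combined with a tail estimate (for instance splitting into $F_j$ and $F_k\setminus F_j$ and using that $\trinorm{\sigma-\sigma_j}_\lambda\to 0$, or invoking the Cauchy property of $\{u_k\}$ furnished by Theorem \ref{thm:stability}). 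The idea is right but the claim as written does not close.
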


\begin{proof}
See the proof of \cite[Theorem 1.2 and Corollary 6.5]{hara2021trace}.
\end{proof}

The following estimate play an essential role in the next section.

\begin{theorem}\label{thm:stability}
Let $\sigma, \nu \in \mathcal{E}_{\lambda}(\Omega) \setminus \{ 0 \}$.
Let $u$ be the weak solution to \eqref{eqn:DP}, and let $v$ be the weak solution to \eqref{eqn:DPG}.
Then,
\[
\| \nabla (u - v) \|_{L^{2}(\Omega)}
\le
\frac{1}{\alpha^{\frac{1}{1 + \lambda}}}
\mathbf{d}_{\lambda}(\sigma, \nu)^{\frac{1}{1 + \lambda}}.
\]
\end{theorem}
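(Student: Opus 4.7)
The plan is to use $w := u - v \in H_{0}^{1}(\Omega)$ itself as test function in the two weak formulations, subtract them, and control the resulting right-hand side by a pointwise estimate that reproduces the exact integrand appearing in the definition of $\trinorm{\cdot}_{\lambda}$. Admissibility of $w$ as test function is provided by the continuous embedding $H_{0}^{1}(\Omega) \hookrightarrow L^{1}(\Omega; \sigma / u^{\lambda})$ (and its analogue for $v$) noted just before the theorem. Subtracting and using the coercivity bound \eqref{eqn:coercivity} yields
\[
\alpha \| \nabla w \|_{L^{2}(\Omega)}^{2}
\le
\int_{\Omega} \frac{w}{u^{\lambda}} \, d\sigma
- \int_{\Omega} \frac{w}{v^{\lambda}} \, d\nu.
\]

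To isolate the variation $|\sigma - \nu|$ on the right, I would decompose $\sigma = \mu + \rho$ and $\nu = \mu + \pi$ with $\mu := \sigma \wedge \nu$, $\rho := (\sigma - \nu)_{+}$, $\pi := (\sigma - \nu)_{-}$, so that $|\sigma - \nu| = \rho + \pi$ and $\mu, \rho, \pi \in \M_{0}^{+}(\Omega)$. The right-hand side splits as
\[
\int_{\Omega} w \, ( u^{-\lambda} - v^{-\lambda} ) \, d\mu
+ \int_{\Omega} \frac{w}{u^{\lambda}} \, d\rho
+ \int_{\Omega} \frac{-w}{v^{\lambda}} \, d\pi,
\]
and the $\mu$-integrand is pointwise non-positive because $t \mapsto t^{-\lambda}$ is decreasing and $w = u - v$. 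This term is discarded, leaving the two integrals against $\rho$ and $\pi$.

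The core of the proof is the pointwise inequality, valid wherever $u, v > 0$:
\[
\frac{w}{u^{\lambda}}
\le \frac{w_{+}}{u^{\lambda}}
= w_{+}^{1 - \lambda} \left( \frac{w_{+}}{u} \right)^{\lambda}
\le w_{+}^{1 - \lambda}
\le |w|^{1 - \lambda},
\]
whose only real content is $w_{+} \le u$, which is immediate from $v \ge 0$. The symmetric estimate $-w / v^{\lambda} \le |w|^{1 - \lambda}$ follows by swapping the roles of $u$ and $v$. Combining these with the previous step gives
\[
\alpha \| \nabla w \|_{L^{2}(\Omega)}^{2}
\le \int_{\Omega} |w|^{1 - \lambda} \, d|\sigma - \nu|
\le \mathbf{d}_{\lambda}(\sigma, \nu) \, \| \nabla w \|_{L^{2}(\Omega)}^{1 - \lambda},
\]
the last step being the defining homogeneity of $\trinorm{|\sigma - \nu|}_{\lambda}$ applied to the quasicontinuous representative of $w$. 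Dividing through produces the claimed bound.

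The only genuinely delicate point is the pointwise estimate $w / u^{\lambda} \le |w|^{1 - \lambda}$, which hinges on the positivity $u, v > 0$ built into the definition of a weak solution; once that is secured, neither an $L^{\infty}$ bound nor any dependence on $\trinorm{\sigma}_{\lambda}$, $\trinorm{\nu}_{\lambda}$ enters the final constant. The endpoint $\lambda = 1$ is handled by the same computation if $|w|^{0}$ is read as $1$: the chain collapses to $w / u \le 1$, and the identification $\trinorm{|\sigma - \nu|}_{1} = |\sigma - \nu|(\Omega)$ recovers the expected bound.
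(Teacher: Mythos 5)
Your proof is correct, and the key mechanisms are the same as the paper's: test against $u - v$, invoke coercivity, exploit the monotonicity of $t \mapsto t^{-\lambda}$, and use the pointwise bound $(u-v)_{+}/u^{\lambda} \le (u-v)_{+}^{1-\lambda}$ (from $v \ge 0$) to reach $\alpha \|\nabla(u-v)\|^2_{L^2} \le \int_\Omega |u-v|^{1-\lambda}\,d|\sigma-\nu|$. The only organizational difference is that you split the measures first (via $\sigma \wedge \nu$ and the Jordan parts) and discard the common-mass term, whereas the paper splits the test function into $(u-v)_{\pm}$ first and then estimates four integrals pairwise; both routes compress to the same chain of inequalities.
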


\begin{proof}
Let $0 < \lambda < 1$.
By assumption,
\[
\int_{\Omega} A \nabla u \cdot \nabla (u - v) \, dx
=
\int_{\Omega} (u - v)_{+} \frac{1}{u^{\lambda}} \, d \sigma
-
\int_{\Omega} (u - v)_{-} \frac{1}{u^{\lambda}} \, d \sigma
\]
and
\[
\int_{\Omega} A \nabla v \cdot \nabla (u - v) \, dx
=
\int_{\Omega} (u - v)_{+} \frac{1}{v^{\lambda}} \, d \nu
-
\int_{\Omega} (u - v)_{-} \frac{1}{v^{\lambda}} \, d \nu.
\]
If $u > v$, then $\frac{1}{u^{\lambda}} <  \frac{1}{v^{\lambda}}$. Thus,
\[
\begin{split}
\int_{\Omega} (u - v)_{+} \frac{1}{u^{\lambda}} \, d \sigma
-
\int_{\Omega} (u - v)_{+} \frac{1}{v^{\lambda}} \, d \nu
& \le
\int_{\Omega} (u - v)_{+} \frac{1}{u^{\lambda}} \, d (\sigma - \nu)
\\
& \le
\int_{\Omega} (u - v)_{+}^{1 - \lambda} \, d (\sigma - \nu)_{+},
\end{split}
\]
where $(\sigma - \nu)_{+}$ is the positive part of $\sigma - \nu$.
Similarly,
\[
\begin{split}
&
\int_{\Omega} (u - v)_{-} \frac{1}{v^{\lambda}} \, d \nu
-
\int_{\Omega} (u - v)_{-} \frac{1}{u^{\lambda}} \, d \sigma
\le
\int_{\Omega} (u - v)_{-}^{1 - \lambda} \, d (\nu - \sigma)_{+}.
\end{split}
\]
Combining the four inequalities and using \eqref{eqn:coercivity}, we obtain,
\[
\begin{split}
\alpha \int_{\Omega} |\nabla (u - v)|^{2} \, dx
& \le
\int_{\Omega} |u - v|^{1 - \lambda} \, d |\sigma - \nu|
\\
& \le
\trinorm{ \, |\sigma - \nu| \, }_{\lambda} \| \nabla |u - v| \|_{L^{2}(\Omega)}^{1 - \lambda}.
\end{split}
\]
By the inequality $| \nabla |\cdot| | \le |\nabla \cdot|$,
we arrive at the desired estimate.
The same argument is valid even if $\lambda = 1$.
\end{proof}

\section{Homogenization}\label{sec:HG}

Let us recall the definition of $H$-convergence.

\begin{definition}
Let $\{ A_{\epsilon} \} \subset M(\alpha, \beta, \Omega)$,
and let $A_{0} \in L^{\infty}(\Omega)^{N \times N}$.
We say that $A_{\epsilon}$ \textit{H-converges} to $A_{0}$ ($A_{\epsilon} \xrightarrow{H} A_{0}$)
if for every $f \in H^{-1}(\Omega)$,
the sequence $\{ u_{\epsilon} \}$ of weak solutions to
\[
\begin{cases}
- \divergence (A_{\epsilon}(x) \nabla u_{\epsilon}) = f \quad \text{in} \ \Omega,
\\
u_{\epsilon} \in H_{0}^{1}(\Omega),
\end{cases}
\]
satisfies
\begin{align*}
u_{\epsilon} & \wkto u_{0} \ \text{weakly in} \ H_{0}^{1}(\Omega),
\\
A_{\epsilon} \nabla u_{\epsilon} & \wkto A_{0} \nabla u_{0} \  \text{weakly in} \ L^{2}(\Omega)^{N},
\end{align*}
where $u_{0}$ is the weak solution to
\begin{equation}\label{eqn:limit-equ}
\begin{cases}
- \divergence (A_{0}(x) \nabla u_{0}) = f \quad \text{in} \ \Omega,
\\
u_{0} \in H_{0}^{1}(\Omega).
\end{cases}
\end{equation}
\end{definition}

The concept of H-convergence was introduced by Murat and Tartar in the late 1970s
(see, e.g., \cite{MR1493039, MR1859696}).
If $A_{\epsilon} \to A_{0}$ a.e. in $\Omega$, then $A_{\epsilon} \xrightarrow{H} A_{0}$.
Many nontrivial examples can be found in periodic homogenization (e.g., \cite{MR3839345}).
If $\{ A_{\epsilon} \} \subset M(\alpha, \beta, \Omega)$ and
$A_{\epsilon} \xrightarrow{H} A_{0}$, then $A_{0}$ belongs to $M(\alpha, \beta, \Omega)$ again.

\begin{remark}\label{rem:strong_conv}
Assume that $A_{\epsilon} \xrightarrow{H} A_{0}$ and $f_{\epsilon} \to f$ strongly in $H^{-1}(\Omega)$.
Let  $\{ u_{\epsilon} \}$ be the sequence of weak solutions to
\[
\begin{cases}
- \divergence (A_{\epsilon}(x) \nabla u_{\epsilon}) = f_{\epsilon} \quad \text{in} \ \Omega,
\\
u_{\epsilon} \in H_{0}^{1}(\Omega).
\end{cases}
\]
Then, $u_{\epsilon} \wkto u_{0}$ weakly in $H_{0}^{1}(\Omega)$,
where $u_{0}$ is the weak solution to \eqref{eqn:limit-equ}.
The strong convergence of $f_{\epsilon}$ can not be replaced by the weak convergence generally.
Further information can be found in \cite[Section 3.1.5]{MR3839345}.
\end{remark}

Our main result is as follows.

\begin{theorem}\label{thm:main2}
Let $\{ A_{\epsilon} \} \subset M(\alpha, \beta, \Omega)$,
$0 < \lambda \le 1$ and $\{ \sigma_{\epsilon} \} \subset \mathcal{E}_{\lambda}(\Omega) \setminus \{ 0 \}$.
Let $\{ u_{\epsilon} \} \subset H_{0}^{1}(\Omega)$ be the sequence of weak solutions to 
\begin{equation}\label{eqn:DPEE}
\begin{cases}
\displaystyle
- \divergence (A_{\epsilon}(x) \nabla u_{\epsilon})
=
\frac{ \sigma_{\epsilon} }{u_{\epsilon}^{\lambda}},
\quad
u_{\epsilon} > 0
\quad \text{in} \ \Omega,
\\
u_{\epsilon} \in H_{0}^{1}(\Omega).
\end{cases}
\end{equation}
Assume that $A_{\epsilon} \xrightarrow{H} A_{0}$
and that there exists $\sigma \in \mathcal{E}_{\lambda}(\Omega) \setminus \{ 0 \}$ such that
$\mathbf{d}_{\lambda}(\sigma, \sigma_{\epsilon}) \to 0$.
Then, $u_{\epsilon} \wkto u_{0}$ weakly in $H_{0}^{1}(\Omega)$,
where $u_{0}$ is the weak solution to
\begin{equation}\label{eqn:DP0}
\begin{cases}
\displaystyle
- \divergence (A_{0}(x) \nabla u_{0}) = \frac{\sigma}{u_{0}^{\lambda}},
\quad 
u_{0} > 0
\quad
\text{in} \ \Omega,
\\
u_{0} \in H_{0}^{1}(\Omega).
\end{cases}
\end{equation}
Moreover,
$\sigma_{\epsilon} / u_{\epsilon}^{\lambda} \to \sigma / u_{0}^{\lambda}$ strongly in $H^{-1}(\Omega)$.
\end{theorem}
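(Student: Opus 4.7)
My plan is to separate the perturbations of $A_\epsilon$ and $\sigma_\epsilon$ by first reducing to a sequence with a fixed measure via Theorem \ref{thm:stability}, and then identifying the limit using an $H$-converging linear auxiliary problem. The energy bound \eqref{eqn:energy_bound} combined with the triangle inequality for $\trinorm{\cdot}_\lambda$ gives $\sup_\epsilon \|\nabla u_\epsilon\|_{L^2(\Omega)} < \infty$. Let $\tilde{u}_\epsilon$ denote the unique weak solution of the singular problem with matrix $A_\epsilon$ but with the fixed measure $\sigma$; applying Theorem \ref{thm:stability} with matrix $A_\epsilon$ and the pair $(\sigma_\epsilon, \sigma)$ yields
\[
\|\nabla(u_\epsilon - \tilde{u}_\epsilon)\|_{L^2(\Omega)}
\le
\alpha^{-\frac{1}{1+\lambda}}\, \mathbf{d}_\lambda(\sigma_\epsilon, \sigma)^{\frac{1}{1+\lambda}} \to 0.
\]
Since $\sigma_\epsilon/u_\epsilon^\lambda - \sigma/\tilde{u}_\epsilon^\lambda = -\divergence(A_\epsilon \nabla(u_\epsilon - \tilde{u}_\epsilon))$, this strong convergence also yields $\|\sigma_\epsilon/u_\epsilon^\lambda - \sigma/\tilde{u}_\epsilon^\lambda\|_{H^{-1}(\Omega)} \to 0$. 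The problem thus reduces to showing $\tilde{u}_\epsilon \wkto u_0$ weakly in $H_0^1(\Omega)$ and $\sigma/\tilde{u}_\epsilon^\lambda \to \sigma/u_0^\lambda$ strongly in $H^{-1}(\Omega)$.

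To identify the limit, I would compare $\tilde{u}_\epsilon$ with the solution $v_\epsilon \in H_0^1(\Omega)$ of the linear auxiliary problem $-\divergence(A_\epsilon \nabla v_\epsilon) = \sigma/u_0^\lambda$. Since $\sigma/u_0^\lambda$ belongs to $H^{-1}(\Omega)$ (because $u_0$ is a finite-energy solution of \eqref{eqn:DP0}) and does not depend on $\epsilon$, the definition of $H$-convergence yields $v_\epsilon \wkto u_0$ weakly in $H_0^1(\Omega)$ and $A_\epsilon \nabla v_\epsilon \wkto A_0 \nabla u_0$ weakly in $L^2(\Omega)^N$. Testing the equations for $\tilde{u}_\epsilon$ and $v_\epsilon$ with $\tilde{u}_\epsilon - v_\epsilon \in H_0^1(\Omega)$, subtracting, and splitting $\tilde{u}_\epsilon - v_\epsilon = (\tilde{u}_\epsilon - u_0) + (u_0 - v_\epsilon)$, the strict monotonicity of $t \mapsto -t^{-\lambda}$ makes the first contribution non-positive and produces
\[
\alpha \|\nabla(\tilde{u}_\epsilon - v_\epsilon)\|_{L^2(\Omega)}^2
\le
\int_\Omega (u_0 - v_\epsilon)\bigl(\tilde{u}_\epsilon^{-\lambda} - u_0^{-\lambda}\bigr)\, d\sigma.
\]

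The main obstacle is that $\tilde{u}_\epsilon^{-\lambda}$ is not a priori bounded, so I would decompose $\sigma = \sigma^k + (\sigma - \sigma^k)$ using Proposition \ref{prop:conv}, where $\sigma^k := \mathbf{1}_{F_k}\sigma \in S_c(\Omega)$ satisfies $\sigma^k \uparrow \sigma$ and $\mathbf{d}_\lambda(\sigma, \sigma^k) \to 0$. On the compact set $F_k \Subset \Omega$, a uniform positive lower bound $\tilde{u}_\epsilon, u_0 \ge c_k > 0$ is obtained by comparing from below with the solution of the singular problem for a fixed nontrivial minorant $\sigma_0 \in S_c(\Omega)$, $\sigma_0 \le \sigma$, combined with the $\epsilon$-uniform Green-function estimate of Theorem \ref{thm:LSW}; the $F_k$-piece of the integral then vanishes as $\epsilon \to 0$ via the compact embedding $H_0^1(\Omega) \hookrightarrow L^1(\Omega; \sigma^k)$ of Lemma \ref{lem:embedding} applied to $v_\epsilon \wkto u_0$ (together with Corollary \ref{cor:a.e.conv}). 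The tail is handled by a Picone-type estimate (extending \eqref{eqn:picone} to matrices in $M(\alpha, \beta, \Omega)$ with constant depending only on $\alpha, \beta$), giving
\[
\int_\Omega \frac{(u_0 - v_\epsilon)^2}{\tilde{u}_\epsilon^{1+\lambda}}\, d\sigma \le C \|\nabla(u_0 - v_\epsilon)\|_{L^2(\Omega)}^2 \le C,
\]
and a Cauchy--Schwarz split combined with $\int \tilde{u}_\epsilon^{1-\lambda}\, d(\sigma - \sigma^k) \le \trinorm{\sigma - \sigma^k}_\lambda \|\nabla \tilde{u}_\epsilon\|_{L^2}^{1-\lambda}$ controls $\int |u_0 - v_\epsilon|\tilde{u}_\epsilon^{-\lambda}\, d(\sigma - \sigma^k)$ by $C\trinorm{\sigma - \sigma^k}_\lambda^{1/2}$ uniformly in $\epsilon$, with an analogous bound for the $u_0^{-\lambda}$-term. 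Letting $\epsilon \to 0$ for fixed $k$ and then $k \to \infty$ through a diagonal argument forces the right-hand side of the key inequality to zero, and hence $\tilde{u}_\epsilon \wkto u_0$. The same decomposition (dominated convergence in $L^2(\Omega; \sigma^k)$, which embeds continuously into $H^{-1}(\Omega)$ by Lemma \ref{lem:embedding}, together with the $\mathbf{d}_\lambda$-tail bound) delivers the strong $H^{-1}$-convergence of $\sigma/\tilde{u}_\epsilon^\lambda$ to $\sigma/u_0^\lambda$, completing the proof.
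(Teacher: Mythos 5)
Your argument is correct, and it reaches the conclusion by a genuinely different route from the paper. Both proofs begin the same way: Theorem~\ref{thm:stability} (applied with the common matrix $A_\epsilon$) reduces matters to the fixed measure $\sigma$, since $\mathbf{d}_\lambda(\sigma,\sigma_\epsilon)\to 0$ controls both $\|\nabla(u_\epsilon-\tilde u_\epsilon)\|_{L^2}$ and the corresponding $H^{-1}$-difference. After that the two approaches diverge. The paper establishes, as separate lemmas, the H-convergence statement for fixed $\sigma$: first for $\sigma\in S_c(\Omega)$ (Lemma~\ref{lem:homogen}, using the $\epsilon$-uniform two-sided bounds on $\tilde u_\epsilon$ near $\spt\sigma$ from Theorem~\ref{thm:LSW}, dominated convergence in $L^2(\sigma)$, and the stability of $H$-convergence under strongly convergent right-hand sides, Remark~\ref{rem:strong_conv}), and then for general $\sigma\in\mathcal E_\lambda(\Omega)$ by the monotone approximation $\sigma_k\uparrow\sigma$ and another application of Theorem~\ref{thm:stability} (Lemma~\ref{lem:homogen2}). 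You instead introduce the single linear auxiliary problem $-\divergence(A_\epsilon\nabla v_\epsilon)=\sigma/u_0^\lambda$, whose $\epsilon$-independent right-hand side makes H-convergence immediately give $v_\epsilon\wkto u_0$, and you prove the strong convergence $\tilde u_\epsilon-v_\epsilon\to 0$ in $H_0^1(\Omega)$ directly by the monotonicity trick plus a Picone/Cauchy--Schwarz decomposition split along $\sigma=\sigma^k+(\sigma-\sigma^k)$. This bypasses the intermediate $S_c$-lemma and Remark~\ref{rem:strong_conv}, and it delivers the $H^{-1}$-convergence of the right-hand sides for free (since $\sigma/\tilde u_\epsilon^\lambda-\sigma/u_0^\lambda=-\divergence(A_\epsilon\nabla(\tilde u_\epsilon-v_\epsilon))$, so your last step can actually be collapsed to a one-liner rather than the parallel decomposition you describe). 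The price is two extra ingredients that the paper avoids at this level: a Picone-type inequality for non-symmetric $A\in M(\alpha,\beta,\Omega)$ with constant $\beta^2/\alpha$, which is true but needs a short justification (and, strictly, a truncation argument to make $\varphi^2/\tilde u_\epsilon$ admissible), and an $\epsilon$-uniform lower bound for $\tilde u_\epsilon$ on each $F_k$; your device of comparing with a fixed nontrivial minorant $\sigma_0\in S_c(\Omega)$, $\sigma_0\le\sigma$, and invoking Theorem~\ref{thm:LSW} with $K=F_k$ is the correct way to get this, and it is essentially the same Green-function estimate the paper uses inside Lemma~\ref{lem:homogen}, just applied on a larger compact set. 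In short, the ingredients (Theorems~\ref{thm:stability} and~\ref{thm:LSW}, the $\sigma_k\uparrow\sigma$ decomposition, compact embedding into $L^1(\sigma^k)$) coincide, but you assemble them around a direct energy comparison with the linear H-converging problem rather than around the $S_c$-lemma and Remark~\ref{rem:strong_conv}.
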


H-convergence can be rewritten as $\Gamma$-convergence by defining an appropriate functional,
and stability of $\Gamma$-convergence with respect to continuous perturbations is well-known
(see, \cite{MR1201152, MR3017993}).
However, our lower-order term does not admit this framework.
Hence, following \cite{MR3177473}, we reduce the problem and use elliptic regularity.

\begin{lemma}\label{lem:homogen}
Let $\{ A_{\epsilon} \} \subset M(\alpha, \beta, \Omega)$, $0 < \lambda \le 1$ and
$\sigma \in S_{c}(\Omega) \setminus \{ 0 \}$.
Let $\{ u_{\epsilon} \} \subset H_{0}^{1}(\Omega)$ be the sequence of weak solutions to 
\begin{equation}\label{eqn:DPE}
\begin{cases}
\displaystyle
- \divergence (A_{\epsilon}(x) \nabla u_{\epsilon}) 
=
\frac{ \sigma }{u_{\epsilon}^{\lambda}},
\quad
u_{\epsilon} > 0
\quad \text{in} \ \Omega,
\\
u_{\epsilon} \in H_{0}^{1}(\Omega).
\end{cases}
\end{equation}
Assume also that $A_{\epsilon}  \xrightarrow{H} A_{0}$.
Then, $u_{\epsilon} \wkto u_{0}$ weakly in $H_{0}^{1}(\Omega)$,
where $u_{0}$ is the weak solution to \eqref{eqn:DP0}.
Moreover, $\sigma / u_{\epsilon}^{\lambda} \to \sigma / u_{0}^{\lambda}$ strongly in $H^{-1}(\Omega)$.
\end{lemma}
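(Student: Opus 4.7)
The plan is to establish uniform two-sided bounds on $u_\epsilon$ (above in $L^\infty$, below on $\mathrm{spt}\,\sigma$), extract a weakly convergent subsequence, upgrade the convergence of the nonlinear source term to strong $H^{-1}$-convergence, and then invoke H-convergence of $A_\epsilon$ together with uniqueness.

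First I would record the uniform bounds. By \eqref{eqn:energy_bound} the sequence $\{u_\epsilon\}$ is bounded in $H_0^1(\Omega)$ by $\alpha^{-1/(1+\lambda)}\trinorm{\sigma}_\lambda^{1/(1+\lambda)}$. Since $\sigma \in S_c(\Omega)$, Theorem \ref{thm:LSW} (applied to $-\divergence(A_\epsilon \nabla U_\epsilon) = \sigma$) combined with Remark \ref{rem:boundary_behavior} and \eqref{eqn:pointwise} gives a uniform bound $u_\epsilon \le M$ in $\Omega$ with $M$ depending only on $N,\alpha,\beta,\Omega,\sigma$. Using this upper bound, $\sigma / u_\epsilon^\lambda \ge \sigma / M^\lambda$ as measures, so by the comparison principle $u_\epsilon \ge V_\epsilon$, where $V_\epsilon$ solves the linear problem \eqref{eqn:poisson} with source $M^{-\lambda}\sigma$. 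Applying Theorem \ref{thm:LSW} again on $K := \spt \sigma$ yields $u_\epsilon \ge C^{-1} M^{-\lambda} G_\Omega \sigma$ on $K$, with $C$ independent of $\epsilon$. Since $G_\Omega \sigma$ is lower semicontinuous and strictly positive on $\Omega$ and $K$ is compact in $\Omega$, $c_0 := \inf_K G_\Omega \sigma > 0$, so $u_\epsilon \ge c > 0$ on $K$ uniformly in $\epsilon$.

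Next I extract a subsequence $u_{\epsilon_k} \wkto u^\ast$ weakly in $H_0^1(\Omega)$ and, via Corollary \ref{cor:a.e.conv}, pass to a further subsequence so that $u_{\epsilon_k} \to u^\ast$ $\sigma$-a.e.; in particular $u^\ast \ge c$ $\sigma$-a.e.\ on $K$. On $K$ the functions $1/u_{\epsilon_k}^\lambda$ are dominated by $c^{-\lambda}$ and converge $\sigma$-a.e.\ to $1/(u^\ast)^\lambda$, so by dominated convergence (using $\sigma(\Omega) < \infty$) one has $1/u_{\epsilon_k}^\lambda \to 1/(u^\ast)^\lambda$ strongly in $L^2(\Omega;\sigma)$. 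Combined with the continuous embedding $H_0^1(\Omega) \hookrightarrow L^2(\Omega;\sigma)$ from Lemma \ref{lem:embedding}, this upgrades to $\sigma/u_{\epsilon_k}^\lambda \to \sigma/(u^\ast)^\lambda$ strongly in $H^{-1}(\Omega)$. Remark \ref{rem:strong_conv} then applies to the equations $-\divergence(A_{\epsilon_k}\nabla u_{\epsilon_k}) = \sigma/u_{\epsilon_k}^\lambda$ and identifies the weak limit $u^\ast$ as the $H_0^1$-solution of $-\divergence(A_0 \nabla u^\ast) = \sigma/(u^\ast)^\lambda$. The pointwise positivity $u^\ast > 0$ in $\Omega$ follows from $u^\ast \ge c$ on $K$ together with the strong maximum principle applied to $u^\ast$ as a supersolution of $-\divergence(A_0 \nabla u^\ast) = 0$ (using the lsc-regularization described in Section \ref{sec:SEP}). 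Since the limit problem has a unique weak solution by Theorem \ref{thm:exist}, $u^\ast = u_0$, and the usual subsequence principle promotes the extracted convergence to the whole sequence; the strong $H^{-1}$-convergence of the nonlinear sources follows simultaneously.

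I expect the main obstacle to be Step 2, the uniform positive lower bound on $K$: it is the only point where we must combine the \emph{nonlinear} upper bound \eqref{eqn:pointwise} with a \emph{linear} Green-function comparison and the qualitative fact that $G_\Omega\sigma$ attains a strictly positive infimum on $K$. Once this is secured the source term is tame, every remaining ingredient (dominated convergence on $K$, the embedding of Lemma \ref{lem:embedding}, H-convergence with strongly convergent data via Remark \ref{rem:strong_conv}, and uniqueness from Theorem \ref{thm:exist}) is already packaged in the earlier sections and only needs to be assembled.
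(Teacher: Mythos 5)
Your proposal is correct and follows essentially the same path as the paper's proof: uniform $H_0^1$ and $L^\infty(\Omega;\sigma)$ bounds from Theorems \ref{thm:exist} and \ref{thm:LSW}, a uniform positive lower bound on $\spt\sigma$ via a linear comparison and the Green-function two-sided estimate, extraction of a $\sigma$-a.e.\ convergent subsequence, dominated convergence to get strong $L^2(\Omega;\sigma)$ (hence $H^{-1}$) convergence of the source, identification of the limit via Remark \ref{rem:strong_conv}, and uniqueness to upgrade subsequential to full convergence. The only difference is that you spell out a few details the paper leaves implicit (the positivity of $\inf_K G_\Omega\sigma$ and the strong maximum principle for $u^\ast>0$), which is a welcome clarification rather than a deviation.
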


\begin{proof}
By assumption and Theorem \ref{thm:exist},
$\| \nabla u_{\epsilon} \|_{L^{2}(\Omega)}$ is bounded.
Denote by $\{ u_{\epsilon} \}$ any subsequence of $\{ u_{\epsilon} \}$ again.
By weak compactness and Lemma \ref{lem:embedding},
we can choose a subsequence of $\{ u_{\epsilon} \}$ such that
$u_{\epsilon} \wkto u_{0}$ weakly in $H_{0}^{1}(\Omega)$
and
$u_{\epsilon} \to u_{0}$ $\sigma$-a.e. in $\Omega$.
Since $\sigma \in S_{c}(\Omega)$, by Theorems \ref{thm:LSW} and \ref{thm:exist},
\[
\| u_{\epsilon} \|_{L^{\infty}(\Omega; \sigma)}
\le
C \| G_{\Omega} \sigma \|_{L^{\infty}(\Omega; \sigma)}^{\frac{1}{1 + \lambda}} (=: M).
\]
Note that the right-hand side is independent of $\epsilon$.
Since
\[
- \divergence (A_{\epsilon}(x) \nabla u_{\epsilon}) 
\ge
\frac{ \sigma }{M^{\lambda}} > 0
\quad \text{in} \ \Omega,
\]
using Theorem \ref{thm:LSW} again, we obtain
\[
u_{\epsilon}(x) \ge \frac{1}{C} G_{\Omega} \sigma(x) \ge m, \quad  \forall x \in \spt \sigma,
\]
where $m$ is a positive constant independent of $\epsilon$.
By the dominated convergence theorem,
\[
\frac{1}{u_{\epsilon}^{\lambda}} \to \frac{1}{u_{0}^{\lambda}} \quad \text{strongly in} \ L^{2}(\Omega; \sigma).
\]
Meanwhile, by Lemma \ref{lem:embedding} and H\"{o}lder's inequality,
\[
\left|
\int_{\Omega}
\varphi \left( \frac{1}{u_{\epsilon}^{\lambda}}  - \frac{1}{u_{0}^{\lambda}} \right) \,
d \sigma
\right|
\le
C
\left\| \frac{1}{u_{\epsilon}^{\lambda}}  - \frac{1}{u_{0}^{\lambda}} \right\|_{L^{2}(\Omega; \sigma)}
\| \nabla \varphi \|_{L^{2}(\Omega)},
\quad
\forall \varphi \in H_{0}^{1}(\Omega).
\]
Thus, $\sigma / u_{\epsilon}^{\lambda} \to \sigma / u_{0}^{\lambda}$ strongly in $H^{-1}(\Omega)$.
Since $A_{\epsilon} \xrightarrow{H} A_{0}$,
it follows from Remark \ref{rem:strong_conv} that $u_{0}$ satisfies \eqref{eqn:DP0}.
By the uniqueness of weak solutions,
the entire sequence $\{ u_{\epsilon} \}$ converges to $u_{0}$ weakly in $H_{0}^{1}(\Omega)$.
\end{proof}

\begin{lemma}\label{lem:homogen2}
The results in Lemma \ref{lem:homogen}
are still valid for $\sigma \in \mathcal{E}_{\lambda}(\Omega) \setminus \{ 0 \}$.
\end{lemma}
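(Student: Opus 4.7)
The plan is to approximate $\sigma$ from below by smooth compactly supported measures $\sigma_k \in S_c(\Omega)$, apply Lemma \ref{lem:homogen} at each level of the approximation, and use the stability estimate in Theorem \ref{thm:stability} to control the error uniformly in $\epsilon$. First, by Proposition \ref{prop:conv}(ii), choose a sequence $\{\sigma_k\} \subset S_c(\Omega) \setminus \{0\}$ with $\sigma_k \uparrow \sigma$ and $\mathbf{d}_\lambda(\sigma, \sigma_k) \to 0$ as $k \to \infty$. For each $k \ge 1$ and each $\epsilon$, let $u_{\epsilon,k} \in H_0^1(\Omega)$ be the unique weak solution to \eqref{eqn:DPE} with $\sigma$ replaced by $\sigma_k$, and let $u_{0,k} \in H_0^1(\Omega)$ be the unique weak solution to \eqref{eqn:DP0} with $\sigma$ replaced by $\sigma_k$; these exist by Theorem \ref{thm:exist}. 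Since all matrices $A_\epsilon, A_0$ lie in $M(\alpha,\beta,\Omega)$, Theorem \ref{thm:stability} applied to the pairs $(\sigma, \sigma_k)$ yields
\[
\| \nabla(u_\epsilon - u_{\epsilon,k}) \|_{L^2(\Omega)} + \| \nabla(u_0 - u_{0,k}) \|_{L^2(\Omega)} \le 2 \alpha^{-\frac{1}{1+\lambda}} \mathbf{d}_\lambda(\sigma, \sigma_k)^{\frac{1}{1+\lambda}} =: c_k,
\]
and $c_k \to 0$ as $k \to \infty$, with $c_k$ independent of $\epsilon$.

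For each fixed $k$, Lemma \ref{lem:homogen} gives $u_{\epsilon,k} \wkto u_{0,k}$ weakly in $H_0^1(\Omega)$ and $\sigma_k / u_{\epsilon,k}^\lambda \to \sigma_k / u_{0,k}^\lambda$ strongly in $H^{-1}(\Omega)$ as $\epsilon \to 0$. For any $\phi \in H^{-1}(\Omega)$, a triangle inequality yields
\[
|\langle \phi, u_\epsilon - u_0 \rangle| \le \| \phi \|_{H^{-1}(\Omega)}\, c_k + |\langle \phi, u_{\epsilon,k} - u_{0,k} \rangle|,
\]
and sending first $\epsilon \to 0$ and then $k \to \infty$ shows $u_\epsilon \wkto u_0$ weakly in $H_0^1(\Omega)$. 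For the $H^{-1}$-convergence of the nonlinear source, observe that from the weak formulations and \eqref{eqn:boundedness} we have
\[
\| \sigma / u_\epsilon^\lambda - \sigma_k / u_{\epsilon,k}^\lambda \|_{H^{-1}(\Omega)} \le \beta\, \| \nabla(u_\epsilon - u_{\epsilon,k}) \|_{L^2(\Omega)} \le \beta c_k,
\]
and likewise $\| \sigma / u_0^\lambda - \sigma_k / u_{0,k}^\lambda \|_{H^{-1}(\Omega)} \le \beta c_k$. A final triangle inequality with the middle term handled by Lemma \ref{lem:homogen} delivers $\sigma / u_\epsilon^\lambda \to \sigma / u_0^\lambda$ strongly in $H^{-1}(\Omega)$.

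The main obstacle is not analytic depth but the correct structuring of the double limit. The pointwise lower bound $u_{\epsilon,k}(x) \ge m > 0$ on $\spt \sigma_k$, obtained from Theorem \ref{thm:LSW}, is unavailable for general $\sigma \in \mathcal{E}_\lambda(\Omega)$, so one cannot hope to pass to the limit in $\sigma / u_\epsilon^\lambda$ by dominated convergence directly. The two-parameter scheme circumvents this: pointwise control is invoked only at the $S_c$-level (via Lemma \ref{lem:homogen}), while all comparisons between the $\sigma_k$- and $\sigma$-problems are handled by the energy-level stability estimate of Theorem \ref{thm:stability}, whose constant depends only on $\alpha$ and $\lambda$, hence is uniform in $\epsilon$. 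This uniformity is what legitimizes interchanging the $\epsilon \to 0$ and $k \to \infty$ limits.
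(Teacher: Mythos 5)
Your proof is correct and follows essentially the same route as the paper: approximate $\sigma$ by $\sigma_k \in S_c(\Omega)\setminus\{0\}$ via Theorem~\ref{thm:approximation}/Proposition~\ref{prop:conv}, split both $u_\epsilon - u_0$ and $\mathcal{L}_\epsilon u_\epsilon - \mathcal{L}_0 u_0$ into three pieces, bound the outer pieces uniformly in $\epsilon$ by Theorem~\ref{thm:stability} (and \eqref{eqn:boundedness} for the $H^{-1}$ case), and dispose of the middle piece with Lemma~\ref{lem:homogen}. The only cosmetic difference is pairing against $\phi \in H^{-1}(\Omega)$ rather than using the $H_0^1$ inner product, which is equivalent.
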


\begin{proof}
By Theorem \ref{thm:exist}, $\{ u_{\epsilon} \}$ is bounded in $H_{0}^{1}(\Omega)$.
Moreover, there exists a unique weak solution $u_{0}$ to \eqref{eqn:DP0}.
Using Theorem \ref{thm:approximation},
we take a sequence $\{ \sigma_{k} \} \subset S_{c}(\Omega) \setminus \{ 0 \}$
such that $\sigma_{k} \uparrow \sigma$.
For $\epsilon > 0$ and $\epsilon = 0$, we consider the following approximating problems:
\begin{equation}\label{eqn:DPEK}
\begin{cases}
\displaystyle
- \divergence (A_{\epsilon}(x) \nabla u_{\epsilon}^{k})
=
\frac{ \sigma_{k} }{(u_{\epsilon}^{k})^{\lambda}},
\quad
u_{\epsilon}^{k} > 0
\quad \text{in} \ \Omega,
\\
u_{\epsilon}^{k} \in H_{0}^{1}(\Omega).
\end{cases}
\end{equation}
Fix $\varphi \in H_{0}^{1}(\Omega)$.
By Theorem \ref{thm:stability},
\[
\begin{split}
|(u_{\epsilon} - u_{0}, \varphi )|
& =
|(u_{\epsilon} - u_{\epsilon}^{k}, \varphi )| + |(u_{\epsilon}^{k} - u_{0}^{k}, \varphi )| + |(u_{0}^{k} - u_{0}, \varphi )|
\\
& \le
\frac{2}{\alpha^{\frac{1}{1 + \lambda}}}
\mathbf{d}_{\lambda}(\sigma, \sigma_{k})^{\frac{1}{1 + \lambda}} \| \nabla \varphi \|_{L^{2}(\Omega)}
+
|(u_{\epsilon}^{k} - u_{0}^{k}, \varphi )|.
\end{split}
\]
By Lemma \ref{lem:homogen},
the latter term goes to zero as $\epsilon \to 0$.
Thus,
\[
\limsup_{\epsilon \to 0} |(u_{\epsilon} - u_{0}, \varphi)|
\le
\frac{2}{\alpha^{\frac{1}{1 + \lambda}}}
\mathbf{d}_{\lambda}(\sigma, \sigma_{k})^{\frac{1}{1 + \lambda}} \| \nabla \varphi \|_{L^{2}(\Omega)}.
\]
By Proposition \ref{prop:conv},
the right-hand side goes to zero as $k \to \infty$.
Thus, $u_{\epsilon} \wkto u_{0}$ weakly in $H_{0}^{1}(\Omega)$.
Similarly, by Theorem \ref{thm:stability} and \eqref{eqn:boundedness},
\[
\begin{split}
\| \mathcal{L}_{\epsilon} u_{\epsilon} - \mathcal{L}_{0} u_{0} \|_{H^{-1}(\Omega)}
& \le
\| \mathcal{L}_{\epsilon} u_{\epsilon} - \mathcal{L}_{\epsilon} u_{\epsilon}^{k} \|_{H^{-1}(\Omega)}
+
\| \mathcal{L}_{\epsilon} u_{\epsilon}^{k} - \mathcal{L}_{0} u_{0}^{k} \|_{H^{-1}(\Omega)}
\\
& \quad +
\| \mathcal{L}_{0} u_{0}^{k} - \mathcal{L}_{0} u_{0} \|_{H^{-1}(\Omega)}
\\
& \le
\frac{2 \beta}{\alpha^{\frac{1}{1 + \lambda}}}
\mathbf{d}_{\lambda}(\sigma, \sigma_{k})^{\frac{1}{1 + \lambda}}
+
\| \mathcal{L}_{\epsilon} u_{\epsilon}^{k} - \mathcal{L}_{0} u_{0}^{k} \|_{H^{-1}(\Omega)}.
\end{split}
\]
By Lemma \ref{lem:homogen},
\[
\| \mathcal{L}_{\epsilon} u_{\epsilon}^{k} - \mathcal{L}_{0} u_{0}^{k} \|_{H^{-1}(\Omega)}
=
\left\|
\frac{\sigma_{k}}{(u_{\epsilon}^{k})^{\lambda}} - \frac{\sigma_{k}}{(u_{0}^{k})^{\lambda}} 
\right\|_{H^{-1}(\Omega)}
\to 0
\quad \text{as} \ \epsilon \to 0.
\]
Thus,
\[
\limsup_{\epsilon \to 0} \| \mathcal{L}_{\epsilon} u_{\epsilon} - \mathcal{L}_{0} u_{0} \|_{H^{-1}(\Omega)}
\le
\frac{2 \beta}{\alpha^{\frac{1}{1 + \lambda}}}
\mathbf{d}_{\lambda}(\sigma, \sigma_{k})^{\frac{1}{1 + \lambda}}.
\]
Taking the limit $k \to \infty$,
we find that $\mathcal{L}_{\epsilon} u_{\epsilon} \to \mathcal{L}_{0} u_{0}$ strongly in $H^{-1}(\Omega)$.
\end{proof}


\begin{proof}[Proof of Theorem \ref{thm:main2}]
Let $\{ v_{\epsilon} \}$ be the sequence of weak solutions to \eqref{eqn:DPE}.
Fix $\varphi \in H_{0}^{1}(\Omega)$.
By Theorem \ref{thm:stability} and Lemma \ref{lem:homogen2},
\begin{equation*}
\begin{split}
|(u_{\epsilon} - u_{0}, \varphi )|
& \le
|(u_{\epsilon} - v_{\epsilon}, \varphi )| + |(v_{\epsilon} - u_{0}, \varphi )|
\\ 
& \le
\frac{1}{\alpha^{\frac{1}{1 + \lambda}}}
\mathbf{d}_{\lambda}(\sigma, \sigma_{\epsilon})^{\frac{1}{1 + \lambda}} \| \nabla \varphi \|_{L^{2}(\Omega)}
+
|(v_{\epsilon} - u_{0}, \varphi )|
\to 0
\quad \text{as} \ \epsilon \to 0.
\end{split}
\end{equation*}
Therefore, $u_{\epsilon} \wkto u_{0}$ weakly in $H_{0}^{1}(\Omega)$.
Similarly,
\[
\begin{split}
\| \mathcal{L}_{\epsilon} u_{\epsilon} - \mathcal{L}_{0} u_{0} \|_{H^{-1}(\Omega)}
& \le
\| \mathcal{L}_{\epsilon} u_{\epsilon} - \mathcal{L}_{\epsilon} v_{\epsilon} \|_{H^{-1}(\Omega)}
+
\| \mathcal{L}_{\epsilon} v_{\epsilon} - \mathcal{L}_{0} u_{0} \|_{H^{-1}(\Omega)}
\\
& \le
\frac{\beta}{\alpha^{\frac{1}{1 + \lambda}}}
\mathbf{d}_{\lambda}(\sigma, \sigma_{\epsilon})^{\frac{1}{1 + \lambda}}
+
\| \mathcal{L}_{\epsilon} v_{\epsilon} - \mathcal{L}_{0} u_{0} \|_{H^{-1}(\Omega)}
\to 0.
\end{split}
\]
This completes the proof.
\end{proof}

\begin{remark}
If $A_{\epsilon} \to A_{0}$ a.e. in $\Omega$,
then Theorem \ref{thm:main2} yields the strong convergence of $u_{\epsilon}$ in $H_{0}^{1}(\Omega)$
(see, e.g., \cite[Theorem 1.2.20]{MR1859696}).
\end{remark}


\bibliographystyle{abbrv} 
\bibliography{reference}


\end{document}